\def\1#1{\overline{#1}}
\def\2#1{\widetilde{#1}}
\def\3#1{\widehat{#1}}
\def\4#1{\mathbb{#1}}
\def\5#1{\frak{#1}}
\def\6#1{{\mathcal{#1}}}
\def\C{{\4C}}
\def\Z{{\4Z}}
\renewcommand{\Im}{\tmop{Im}}
\renewcommand {\a}{\alpha}
\renewcommand {\b}{\beta}
\newtheorem{thm}{Theorem}[section]
\newtheorem{propos}[thm]{Proposition}
\newtheorem{corol}[thm]{Corollary}
\theoremstyle{definition}
\newtheorem{dfn}[thm]{Definition}
\newcommand{\tr}{\ensuremath{\mbox{\bf tr}\,}}
\renewcommand{\Im}{\mathop{\rm Im}\nolimits}
\newcommand{\im}{\ensuremath{\mbox{\rm Im}\,}}
\newcommand{\re}{\ensuremath{\mbox{\rm Re}\,}}
\newcommand{\theor}[1]{\smallskip  \noindent \bf Theorem #1.\it\,\,}
\newcommand{\CC}[1]{\mathbb{C}^{#1}}
\newcommand{\RR}[1]{\mathbb{R}^{#1}}
\numberwithin{equation}{section}
\title{Convergent normal form for real hypersurfaces at generic Levi degeneracy }
\author {I. Kossovskiy}
\address{Department of Mathematics, University of Vienna}
\email{ilya.kossovskiy@univie.ac.at}
\thanks{*Supported by the Austrian Science Foundation grant M1413-N25.}
\author {D. Zaitsev*}
\address{School of Mathematics, Trinity College, Dublin}
\email{zaitsev@maths.tcd.ie}
\thanks{*Supported in part by the Science Foundation Ireland grant 10/RFP/MTH2878.}
\begin{document}

\date{\today}

\begin{abstract}
We construct a complete convergent normal form for a real
hypersurface in $\CC{N},\,N\geq 2$ at generic Levi degeneracy.
This seems to be the first convergent normal form for a
Levi-degenerate hypersurface. In particular, we obtain, in the
spirit of the work of Chern and Moser \cite{chern}, distinguished
curves in the Levi degeneracy set, that we call \it degenerate
chains.\rm
\end{abstract}

\maketitle \tableofcontents

\section{Introduction}
In the study of any geometric structure on a manifold $M$, a {\em
normal form}, corresponding to special  choices of coordinates
adapted to the structure, is of fundamental importance. In case of
Riemannian metric, such special coordinates are the normal
coordinates given by the exponential map. For a vector field $X$
that does not vanish at a point $p\in M$, special local
coordinates can be chosen, where this vector field is constant.
If, however, $X$ does vanish at $p$, its normal form, known as
{\em Poincare-Dulac normal form} \cite{ilyashenko}, exists in
general only in formal sense, whereas its convergence is a
delicate issue depending on presence of so-called small divisors.
Because of such clear difference in behavior, a point where $X$
vanishes, is to be treated as {\em singularity of $X$}.

The study of real submanifolds $M$ in complex spaces $\C^n$ is
remarkable in which it exhibits both regular and singular
phenomena. A simple example of a regular point is a {\em CR point}
$p_0\in M$, for which the {\em complex tangent space}
$$T_p^\C:= T_pM \cap JT_pM$$
is of constant dimension for $p\in M$ near $p_0$, whereas a point
$p_0$ for which this dimension is not constant in any
neighborhood, is called a {\em CR singularity}. (Here $J$ is the
standard complex structure on $\C^n$.) In relation with normal
form (and many other questions), a regular behavior occurs for
{\em Levi-nondegenerate hypersurfaces}, where a normal form was
constructed by Chern and Moser \cite{chern}. On the other hand, at
``singular" points, where the Levi form is degenerate, no such
normal form is known in general.

A particular property of the normal form constructed by Chern and
Moser is its {\em convergence} for $M$ real-analytic. As a
consequence, geometry of the CR structure of $M$ can be studied
using its normal form. Another remarkable consequence is the
presense of so-called {\em chains}, i.e.\ certain distinguished
real curves that can be locally constructed as certain lines in
coordinates corresponding to the normal form.

Since the work of Chern and Moser, normal forms have been
constructed {\em in the formal sense} for certain classes of
Levi-degenerate hypersurfaces by Ebenfelt \cite{ebenfeltC3,
ebenfeltjdg}, Wong \cite{wong}, Kolar \cite{kolar}, Kolar and
Lamel \cite{kl}. We also refer to Moser-Webster \cite{mw}, Huang
and Yin \cite{hy}, and Burcea \cite{bu} for normal forms at
CR-singular points. These forms were either known to be divergent
in general or the question about their convergence was left open.
See e.g.\ \cite{mw} where convergence only holds in so-called {\em
elliptic case} or \cite{kolardiverg} for examples of divergent
normal forms. For normal forms for Levi-nondegenerate CR-manifolds
in higher codimension see Ezhov and Schmalz \cite{es} and
Beloshapka \cite{obzor}.

\medskip
The goal of this paper is to establish what seems to be the first
convergent normal form for Levi-degenerate real-analytic
hypersurfaces. Our normal form is constructed for the well-known
class of {\em generic Levi-degeneracy} points introduced by
Webster \cite{webstergeneric}, i.e.\ points where the determinant
of the Levi form vanishes but its differential restricted to the
complex tangent space doesn't. Note that a point of generic Levi
degeneracy is ``stable'' in the sense that it cannot be removed by
a small perturbation. A different {\em formal normal form} for
generic Levi degeneracy points, whose convergence remains unknown,
was constructed by Ebenfelt \cite{ebenfeltjdg}. More recently,
Kolar \cite{kolar} constructed a formal normal form for all
hypersurfaces of finite type in $\C^2$, which, however, is
divergent in general (see \cite{kolardiverg}).

The convergence proof for the normal form by Chern and Moser is
heavily based on the property that  Levi forms at different points
are isomorphic. In fact, the geometry and normal forms look
similar at all points. Consequently, normalization conditions for
the normal form at a point $p\in M$ depend analytically on $p$,
and hence, can be translated into systems of certain analytic
ordinary differential equations whose solutions are again
real-analytic. This is {\em not the case any more}
 for points of generic
Levi degeneracy, in whose neighborhoods the Levi form does not
have constant rank. Thus geometry at those degenerate points is
fundamentally different from that at Levi-nondegenerate points. To
overcome this new difficulty, we first restrict to the subset of
all Levi-degenerate points of $M$ which, at a point of generic
Levi degeneracay, is always a (real-analytic) hypersurface
$\Sigma$ in $M$, transverse to the Levi kernel at each point of
it. Then we set up a system of real-analytic ordinary differential
equations {\em along $\Sigma$} defining analogues of Chern-Moser
chains, that we call {\em degenerate chains}, and further
differential equations to restrict (normalize) parametrizations of
degenerate chains. Through every point of $\Sigma$ we obtain a
unique (in the sense of germs) degenerate chain, and its
normalized parametrization is determined up to linear scaling.
Also note that our degenerate chains can be still defined for
merely smooth real hypersurfaces by the same equations.

\medskip
We now give complete statements of our normal form, which are
different in $\C^2$ and higher dimension. This is due to the
presense of Levi-nondegenerate directions in higher dimension. We
write
$$(z,w)=(z, u+iv)\in \C\times \C, \quad
(\check z, z_n, w) = ( z_1,\ldots, z_{n-1}, z_n, u+iv)\in
\C^{n-1}\times \C\times \C, $$ for the coordinates in $\C^2$ and
in $\C^{n+1}$, $n\ge 2$, respectively. Then in $\C^2$ we obtain:

\theor{1} Let $M\subset\CC{2}$ be a real-analytic hypersurface
with generic Levi degeneracy at a point $p\in M$. Then there
exists a local biholomorphic transformation $F$ of $\C^2$ sending
$p$ to $0$ and $M$ into a normal form
\begin{equation}
\label{normalform} v=2\re(z^2\bar z)+\sum\limits_{k,l\geq
2}\Phi_{kl}(u)z^k\bar z^l, \quad
\re\Phi_{32}(u)=\im\Phi_{42}(u)=0.
\end{equation}

The germ of $F$ at $p$ is uniquely determined by the restriction
of its differential $dF$ to the complex tangent space
$T^{\C}_{p}M$.
%, which in turn, is unique up to a non-zero real factor.

Furthermore, the Levi degeneracy set of $M$ is canonically
foliated by degenerate chains, where the chain through $p$ is
locally given by $z=0$ in any normal form at $p$. \rm

\bigskip

The normal form here is similar but different from the formal
normal form of Kolar \cite{kolar}.

In the case of higher dimension, we write $M$ as
$$v
=\sum_{k',k,l',l\in\Z_{\ge0}} \Phi_{k'kl'l}(\check z,\1{\check
z},u) z_n^k \bar z_n^l,$$ where each $\Phi_{k'kl'l}(\check
z,\1{\check z},u)\in \C\{u\}[\check z,\1{\check z}]$ is a
bi-homogeneous polynomial of bi-degree $(k',l')$ in $(\check
z,\overline{\check z})$ with real-analytic coefficients in $u$.
Denote by $r\geq\frac{n-1}{2}$ the number of positive eigenvalues
of the Levi form of $M$ at a reference point $p\in M$, and write
\begin{equation}\label{Levi}
\langle \check z,\overline{\check
z}\rangle:=\sum\limits_{j=1}^{n-1}\varepsilon^j\bigl|z_j\bigr|^2,
\quad \varepsilon^j=1 \text{ for }  j\leq r, \quad
\varepsilon^j=-1 \text{ for } j>r,
\end{equation}
and
$$\mbox{\bf tr}:=\sum\limits_{j=1}^{n-1}\varepsilon^j\frac{\partial^2}{\partial z_j\partial\overline{z_j}}$$
for the {\em trace operator} (that is also used by Chern and Moser
in their normal form). Now our main result in higher dimension is
the following:

\theor{2} Let $M\subset\CC{n+1}$, $n\geq 2$, be a real-analytic
hypersurface with generic Levi degeneracy at a point $p\in M$.
Then there exists a local biholomorphic transformation $F$ of
$\C^{n+1}$ sending $p$ into $0$ and $M$ into a normal form
\begin{equation*}
v=\left\langle \check z,\overline{\check z}\right\rangle +
2\re\bigl(z_n^2\overline{z_n}\bigr) +\sum\limits_{k'\geq
2}\bigl(\Phi_{k'001}(\check z,\1{\check z},u)\bar z_n
+\Phi_{01k'0}(\check z,\1{\check z},u)z_n\bigr)\, +\sum\limits_{
k'+k, l'+l\geq 2}\Phi_{k'kl'l}(\check z,\1{\check z},u) z_n^k \bar
z_n^l,
\end{equation*}
where terms in the last sum satisfy the normalization conditions
\begin{eqnarray}
\label{normalformn}
 \Phi_{1102}=0,\quad \tr\Phi_{1111}=0,\notag\\
\re\Phi_{1211}=0,\quad \im\tr\Phi_{1211}=0.
\end{eqnarray}

The germ of $F$ at $p$ is uniquely determined by the restriction
of its differential $dF$ to the complex tangent space
$T^{\C}_{p}M$, which in turn, is uniquely determined by a pair
$(\lambda,C)$, where $\lambda$ is a real scalar and $C$
is a complex-linear automorphism of $\C^{n-1}$ such that
$$\langle C\check z,\overline{C\check z}\rangle = \lambda
\langle \check z,\overline{\check z}\rangle.$$

Furthermore, the Levi degeneracy set of $M$ is canonically
foliated by degenerate chains, where the chain through $p$ is
locally given by $z=0$ in any normal form at $p$. \rm

\bigskip

Note that our normal form here is very different from
\cite{ebenfeltjdg}, and also depends on fewer parameters. In fact,
the normal form is parametrized precisely by the group of all
automorphisms (fixing the origin) of the model hypersurface
\begin{equation}\label{model0}
v=\left\langle \check z,\overline{\check z}\right\rangle +
2\re\bigl(z_n^2\overline{z_n}\bigr),
\end{equation}
which, as a consequence of Theorem~2, consists of all linear
automorphisms of $\CC{n+1}$, preserving \eqref{model0}.

\section{Notations and preliminaries}
Recall that the {\em Levi form} of a real hypersurface
$M\subset\CC{n+1},\,n\geq 1$ is defined for CR points $p$ by
$$\6L_p\colon T_p^\C M \times T_p^\C M \to
\C\otimes (T_pM/ T_p^\C M), \quad \6L_p(X( p),Y( p)) = [X^{10},
Y^{01}]( p)
 \mod \C\otimes T_pM,
 $$
where $X$ and $Y$ are vector fields in $T^\C M$ and
$$X^{10}:=X-iJX, \quad X^{01}:= X +iJX,$$
are the corresponding $(1,0)$ and $(0,1)$ vector fields.

For a real-analytic hypersurface $M\subset\CC{2}$ with generic
Levi degeneracy at a point $p\in M$, we consider its Levi
degeneracy set $\Sigma$, which for $M\subset\CC{2}$ is simply the
set of points where the whole Levi form is zero. In our case of
generic Levi degeneracy $\Sigma$ is a totally real submanifold.
 Then for
$p\in \Sigma$ we can naturally define, using third order Lie
brackets, the canonical cubic form
\begin{equation}\label{cubicform}
c\colon (T^{\C}_{p}M)^{3} \to \C\otimes\frac{T_{p}M}{T^{\C}_{p}M},
\quad c(X(p), Y(p), Z(p)) = [X^{10}, [Y^{10}, Z^{01}]](p) \mod
\C\otimes T_{p}^{\C}M.
\end{equation}
%on local sections near $p$ of the
%bundle $\mathcal B$, valued in $T\Sigma/\mathcal B$.
We can always choose local coordinates in $\CC{2}$ in such a way
that
$$p=0,\quad T_0M=\{v=0\},\quad T_0\Sigma=\{v=0,\,\re z=0\},$$
 and the
canonical cubic form at $0$ is given by
%the constraint of the cubic form
$z^2\bar z$.
% defined on $T^{\CC{}}_0M$, to the subspace
%$T^{\CC{}}_0M\cap T_0\Sigma$.
In what follows we always assume that coordinates for $M$ are
chosen in this manner. Considering a hypersurface, given near the
origin by the defining equation $v=\Phi(z,\bar z,u)$, and using
expansions of the form
$$\Phi(z,\bar z,u)=\sum\limits_{k,l\geq 0}\Phi_{kl}(u)z^k\bar
z^l,$$ where $\Phi_{kl}(u)$ are real-analytic near the origin, we
can read the above normalization of the canonical cubic form  as
$\Phi_{21}(0)=1$.

Let then $M\subset\CC{n+1},\,n\geq 2$, be a real-analytic
hypersurface with generic Levi degeneracy at a point $p\in M$. It
is shown in \cite{ebenfeltjdg} that one can define for $M$ a
third-order invariant cubic form
\begin{equation}\label{cubicformn}
c\colon T^{10}_pM \times T^{10}_pM \times K^{01}_p  \to \C\otimes\frac{T_{p}M}{T^{\C}_{p}M},
\end{equation}
which we call \it Ebenfelt's tensor, \rm such that its restriction
onto $(K_{p}^{10})^{2}\times K_p^{01}$ is non-vanishing (here
$K_p\subset T^{\CC{}}_pM$ is the Levi kernel of $M$ at $p$,
$\mbox{dim}_{\CC{}}K_p=1$). The canonical cubic form
\eqref{cubicformn} replaces the tensor \eqref{cubicform} in the
higher dimensional case and enables us to fix canonically the \it
Levi-nondegenerate direction at $p$ \rm as the complex hyperplane
$$K^T_p=\Bigl\{X\in T^{\CC{}}_pM:\quad c(X^{10},Y^{10},Z^{01})=0\quad \forall\, Y,Z\in K_p\Bigr\}\subset T^{\CC{}}_pM.$$
The subspace $K^T_p$ is transverse to $K_p$. We assume in what
follows that local coordinates for $M$ near $p$ are chosen in such
a way that $p$ is the origin, the tangent space at $0$ is
$\{v=0\}$, the Levi kernel at $0$ is given by $\{\check
z=0,\,w=0\}$, the Levi-nondegenerate direction by
$\{z_n=0,\,w=0\}$, the Levi form at $0$
 by
\begin{equation}\label{Levi}
\langle \check z,\overline{\check
z}\rangle:=\sum\limits_{j=1}^{n-1}\varepsilon^j\bigl|z_j\bigr|^2,
\quad \varepsilon^j=1 \text{ for } 0\leq j\leq r, \quad
\varepsilon^j=-1 \text{ for } r+1\leq j\leq n-1,
\end{equation}
and Ebenfelt's tensor by $(z_n)^2\overline{z_n}.$ The canonical
Hermitian form (i.e., the Levi form) and the canonical cubic form,
associated with $M$, enable us to define the differential operator
$$\mbox{\bf tr}:=\sum\limits_{j=1}^{n-1}\varepsilon^j\frac{\partial^2}{\partial z_j\partial\overline{z_j}}$$
on the space of formal series $\Phi(\check z,\1{\check z},u)$.
%Applied to each of the
%homogeneous components, these
%operators get a clear tensor interpretation, as discussed, for
%example, in \cite{chern}.
We have the identity
$$\mbox{\bf tr}\langle\check z,\overline{\check
z}\rangle=n-1.$$

We are now in the position to proceed with the normalization
procedure. As  presence of the Levi-nondegenerate direction for a
hypersurface in $\CC{n+1},\,n\geq 2$ makes the normalization
procedure  significally different from the one in the
two-dimensional case, we consider the two-dimensional case
separately in Section 3, and then consider the high-dimensional
case in Section 4.

\section{The two-dimensional case}

Let $M\subset\CC{2}$ be a real-analytic hypersurface with generic
Levi degeneracy at the point $0\in M$. Choosing local holomorphic
coordinates $(z,w)=(z,u+iv)$ near the origin as described before,
we represent $(M,0)$ by a defining equation
$$v=\Phi(z,\bar z,u)=\sum\limits_{k,l\geq 0}\Phi_{kl}(u)z^k\bar
z^l$$ with $\Phi(0)=0,\,d\Phi(0)=0,\
\Phi_{11}(0)=0,\,\Phi_{21}(0)=1$.  We perform a transformation
$w\mapsto w+Q(z)$, where $Q(z)$ is a cubic polynomial, in order to
eliminate the pure quadratic and pure cubic terms in $\Phi$. Hence
we end up with a hypersurface
\begin{eqnarray}
 \label{simplified} v=\Phi(z,\bar
z,u)=\sum\limits_{k,l\geq 0}\Phi_{kl}(u)z^k\bar z^l,\\
\Phi(0)=\Phi_{10}(0)=\Phi_{11}(0)=\Phi_{20}(0)=\Phi_{30}(0)=0,\,\Phi_{21}(0)=1.\notag
\end{eqnarray}
In what follows we consider only transformations, preserving the
form \eqref{simplified}. The Levi-degeneracy set $\Sigma\subset M$
is a totally real manifold
\begin{equation}
\label{nonflat} \Sigma=\bigl\{\re z=\chi(\im z,u),\quad\im w
=\tau(\im z,u)\bigr\},\quad \chi(0)=\tau(0)=0,\,d\chi(0)=d\tau(0)=
0.
\end{equation}

\subsection{Formal normalization in the two-dimensional case}

\mbox{}\\

We start with the proof of the formal version of Theorem 1. Let us
introduce weights for the variables $(z,w)$ in the following way:
$$[z]=1,\,[w]=3.$$
These weights correspond to the choice of a \it model \rm for
hypersurfaces of the form \eqref{simplified}, given by
\begin{equation}
\label{model} v=2\re(z^2\bar z).
\end{equation}
This model, first introduced in \cite{belsmall} and later used in
\cite{kolar}, is weighted homogeneous with respect to this choice
of weights. Then for any formal power series
$f(z,w)\in\CC{}[[z,w]]$ without constant terms we get the formal
expansion
$$f(z,w)=\sum_{m\geq 1}f_m(z,w),$$
 where each $f_m(z,w)$ is a
weighted homogeneous polynomial of weight $m$. Similarly, for the
right-hand side we get the expansion
$$\Phi=2\re(z^2\bar z)+\sum_{m\geq 4}\Phi_m(z,\bar z,u)$$
(so that any hypersurface \eqref{simplified} can be interpreted as
a perturbation of the model \eqref{model}).

If now $M=\{v=\Phi(z,\bar z,u)\}$ and $M^*=\{v^*=\Phi^*(z^*,\bar
z^*,u^*)\}$ are two hypersurfaces, satisfying \eqref{simplified},
and $z^*=f(z,w),\,w^*=g(z,w)$ is a formal invertible
transformation, transforming $(M,0)$ into $(M^*,0)$, we obtain the
identity
\begin{equation}
\label{tangency} \im g(z,w)|_{w=u+i\Phi(z,\bar z,u)}
=\Phi^*(f(z,w),\overline{f(z,w)},\re g(z,w))|_{w=u+i\Phi(z,\bar
z,u)}.
\end{equation}
Collecting in \eqref{tangency} terms of the low weights $1,2$, we
get $g_1=g_2=0$. For the weight $3$ we obtain $f_1=\lambda
z,\,g_3=\lambda^3 w,\,\lambda=f_z(0,0)\in\RR{}\setminus\{0\}$.
Thus the initial transformation $F=(f,g)$ can be uniquely
decomposed as $F=\tilde F\circ\Lambda$, where
\begin{equation}
\label{dilations}\Lambda(z,w)=(\lambda z,\lambda^3
w),\,\lambda\in\RR{}\setminus\{0\},
\end{equation}
is an automorphism of the model \eqref{model} and the weighted
components of the new mapping $\tilde F$ satisfy
\begin{equation}
\label{normalizedmap} f_1=z,\quad g_1=g_2=0,\quad g_3=w.
\end{equation}
Hence in what follows we can restrict ourself to transformations,
satisfying \eqref{normalizedmap}
%(with a, possibly, transformed $M$).
After these preparations we consider \eqref{tangency} as an
infinite series of weighted homogeneous equations, which can be
written for any fixed weight $m\geq 4$ as
\begin{equation}
\label{CM}\re\left(ig_{m}+(2z\bar z+\bar
z^2)f_{m-2}\right)\left|_{w=u+i(z^2\bar z+z\bar
z^2)}\right.=\Phi^*_m-\Phi_m+\cdot\cdot\cdot,
\end{equation}
where dots stands for a polynomial in $z,\bar z,u$ and
 $f_{j-2},g_{j}$ with $j<m$ and their
derivatives in $u$ (here
$f_{j-2}=f_{j-2}(z,u),\,g_j=g_{j}(z,u),\,\Phi_j=\Phi_j(z,\bar
z,u))$.

Let us denote by $\mathcal F$ the space of formal real-valued
power series $\Phi(z,\bar z,u)=\sum_{m\geq 4}\Phi_m(z,\bar z,u)$,
satisfying \eqref{simplified}, by $\mathcal N\subset\mathcal F$
the subspace of series satisfying the normalization conditions
\eqref{normalform}, and by $\mathcal G$ the space of pairs $(f,g)$
of formal power series without constant term, satisfying
\eqref{normalizedmap}.
%Let us decompose the space $\mathcal F$ of formal real-valued
%power series $\Phi(z,\bar z,u)=\sum_{m\geq 4}\Phi_m(z,\bar z,u)$,
%satisfying \eqref{simplified}, into the direct sum $\mathcal
%R\oplus\mathcal N$, where $\mathcal N\subset\mathcal F$ is the
%subspace of series satisfying the normalization conditions
%\eqref{normalform}, and $\mathcal R\subset\mathcal F$ is the range
%of the $\RR{}$-linear operator $P:\,\mathcal F\mapsto \mathcal F$,
%$P^2=P$, defined by
%\begin{eqnarray*}P(\Phi):=\sum\limits_{k\geq
%0}(\Phi_{k0}(u)z^k+\Phi_{0k}(u)\bar z^k)+ \sum\limits_{k\geq
%2}(\Phi_{k1}(u)z^k\bar z+\Phi_{1k}(u)z\bar z^k)+\\+ (i\im
%\Phi_{32}(u)z^3\bar z^2+i\im\Phi_{23}(u)z^2\bar z^3)+(\re
%\Phi_{42}(u)z^4\bar z^2+\re \Phi_{24}(u)z^2\bar z^4\end{eqnarray*}
%($P$ is the projection in $\mathcal F$ onto the subspace $\mathcal
%R$).
In view of \eqref{CM}, in order to prove the formal version of
Theorem 1, it is sufficient now to prove

\begin{propos}
For the linear operator $$ L(f,g):=\re\left(ig+(2z\bar z+\bar
z^2)f\right)\left|_{w=u+i(z^2\bar z+z\bar z^2)}\right.$$  we have
the direct sum decomposition $\mathcal F=L(\mathcal
G)\oplus\mathcal N$.
\end{propos}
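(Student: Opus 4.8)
The plan is to establish the direct sum decomposition $\mathcal F = L(\mathcal G)\oplus\mathcal N$ by analyzing the operator $L$ weight-by-weight. Since $L$ maps a pair $(f,g)\in\mathcal G$ (with weighted components $f=\sum_{j\ge 1}f_{j}$, $g=\sum_{j\ge 3}g_{j}$) to a real-valued series, and since both $\mathcal F$ and $\mathcal N$ are graded by the homogeneity weight $m\ge 4$, it suffices to prove the decomposition in each fixed weight $m$ separately. Concretely, for each $m\ge 4$ I would show that the weight-$m$ piece
\[
L_m(f_{m-2},g_m)=\re\left(ig_m+(2z\bar z+\bar z^2)f_{m-2}\right)\Big|_{w=u+i(z^2\bar z+z\bar z^2)}
\]
has image complementary to $\mathcal N_m$ inside $\mathcal F_m$, the space of weight-$m$ real-valued polynomials $\Phi_m(z,\bar z,u)$. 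Here $f_{m-2}=f_{m-2}(z,u)$ and $g_m=g_m(z,u)$ are holomorphic in $z$, and the normalization conditions defining $\mathcal N_m$ are the relevant components of $\re\Phi_{32}=\im\Phi_{42}=0$ together with the vanishing of pure terms already imposed in \eqref{simplified}.

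First I would fix a weight $m$ and expand $g_m(z,u)=\sum_j a_j(u)z^j$ and $f_{m-2}(z,u)=\sum_j b_j(u)z^j$ with the weight constraints, then substitute $w=u+i(z^2\bar z+z\bar z^2)$ and Taylor-expand in powers of the (weight-$3$) correction $z^2\bar z+z\bar z^2$. Taking the real part produces an explicit linear map from the coefficient sequences $(a_j(u),b_j(u))$ into the coefficients $\Phi_{kl}(u)$ of the target. The key computation is to read off which monomials $z^k\bar z^l u^*$ can be hit: the terms $\re(ig_m)$ contribute the ``boundary'' monomials (small $l$), while $\re((2z\bar z+\bar z^2)f_{m-2})$ shifts a holomorphic series into mixed monomials with $l\in\{1,2\}$. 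I would then verify that, modulo the finitely many resonant coefficients singled out in \eqref{normalform}, every target $\Phi_m$ is uniquely expressible in the image — i.e. that $L_m$ is injective on $\mathcal G_m$ and its cokernel is exactly $\mathcal N_m$. Surjectivity onto $\mathcal F_m/\mathcal N_m$ follows by solving triangularly for the $a_j(u),b_j(u)$ in decreasing order of $l$, and injectivity follows because \eqref{normalizedmap} removes the kernel coming from the model's own automorphisms (the dilations \eqref{dilations} having already been factored out).

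The main obstacle I anticipate is the bookkeeping at the \emph{resonant} monomials, precisely those governed by the conditions $\re\Phi_{32}=\im\Phi_{42}=0$. At these specific bidegrees the operator $L_m$ fails to be surjective onto all of $\mathcal F_m$: the real or imaginary part of a particular coefficient $\Phi_{kl}(u)$ is not reachable, which is exactly why those components must be prescribed to lie in $\mathcal N$. The delicate point is to check that these are the \emph{only} obstructions — that the dimension of the cokernel in each weight matches precisely the number of normalization conditions in \eqref{normalform}, so that the sum is both direct (trivial intersection $L(\mathcal G)\cap\mathcal N=\{0\}$) and spanning. I would handle this by writing out the $2\times 2$ (real-coefficient) linear system relating $(a_j,b_j)$ to the real and imaginary parts of $\Phi_{32}$ and $\Phi_{42}$ at the critical weights $m=7,8$, and confirming that its rank leaves exactly one scalar real equation unsolved in each case, matching $\re\Phi_{32}=0$ and $\im\Phi_{42}=0$ respectively, while all other weights give an isomorphism onto the full $\mathcal F_m$.
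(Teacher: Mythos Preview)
Your weight-by-weight setup is legitimate in principle and matches how the paper reduces Theorem~1 to this Proposition via \eqref{CM}. But your description of the cokernel is wrong in a way that blocks the argument. You claim the only obstructions are $\re\Phi_{32}$ and $\im\Phi_{42}$, localized at ``critical weights $m=7,8$'', with ``all other weights giv[ing] an isomorphism onto the full $\mathcal F_m$''. This fails on several counts. The monomial $z^3\bar z^2$ has weight $5$ and $z^4\bar z^2$ weight $6$, not $7,8$; and since $\re\Phi_{32}(u)=\im\Phi_{42}(u)=0$ are conditions on \emph{functions} of $u$, they recur at all weights $5+3j$, $6+3j$. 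More importantly, the normal space $\mathcal N$ from \eqref{normalform} requires $\Phi_{kl}\equiv0$ whenever $\min(k,l)\le1$, and these are \emph{not} conditions already present in $\mathcal F$: the space defined by \eqref{simplified} only fixes a handful of values at $u=0$. Hence $\mathcal N_m\subsetneq\mathcal F_m$ for every $m\ge4$ (already at $m=4$ one has $\Phi_{22}(0)z^2\bar z^2\in\mathcal N_4$ while $\Phi_{40}(0)z^4+\overline{\Phi_{40}(0)}\,\bar z^4\in\mathcal F_4\setminus\mathcal N_4$), so $L_m$ is never onto $\mathcal F_m$. Until this picture is corrected the dimension count you propose cannot close.

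The paper's proof sidesteps weight-by-weight bookkeeping: it expands $f=\sum_k f_k(w)z^k$, $g=\sum_k g_k(w)z^k$ and collects \eqref{cm-eq1} by bidegree $(k,l)$ in $(z,\bar z)$, obtaining for each $(k,l)$ an equation in finitely many of the unknown functions $f_j(u),g_j(u)$ and their $u$-derivatives, uniformly in weight. The bidegrees with $\min(k,l)\le1$ are exactly the ones used to solve for $(f,g)$; bidegrees $(3,2)$ and $(4,2)$ are special not as ``obstructions'' but as the only $k,l\ge2$ bidegrees needed in addition, and there only $\re\Psi_{32}$ and $\im\Psi_{42}$ enter, via the equations for $\im f_0'$ and $\re f_1'$. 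Your ``triangularly in decreasing $l$'' is also too coarse for what actually happens: one must solve a coupled real system from bidegrees $(2,0),(4,1),(3,2)$ (after differentiating one equation) for $\re g_2',\re f_3,\im f_0'$, and another from $(0,0),(2,1),(4,2)$ for $g_0,f_1$, invoking the initial data in \eqref{normalizedmap} to fix constants of integration.
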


Indeed, it follows from Proposition 2.1 that if $m\geq 4$ is an
integer and all $f_{j-2},g_{j},\Phi^*_j$ with $j<m$ are already
determined, then one can uniquely choose the collection
$(f_{m-2},g_{m},\Phi^*_m)$ in such a way that \eqref{CM} is
satisfied and $\Phi^*_m\in\mathcal N$. This implies the existence
and uniqueness of the desired normalized mapping $(f,g)$ and the
normalized right-hand side $\Phi^*(z,\bar z,u)$.

\begin{proof}[Proof of Proposition 2.1]
The statement of the proposition is equivalent to the fact that an
equation $L(f,g)=\Psi(z,\bar z,u),\,(f,g)\in\mathcal G$ in $(f,g)$
has a unique solution, modulo $\mathcal N$ in the right-hand side,
for any fixed $\Psi\in\mathcal F$. To simplify the calculations,
we replace an equation $L(f,g)=\Psi(z,\bar z,u)$ by the equation
\begin{equation}
\label{cm-eq} 2L(f,g)=\Psi(z,\bar z,u),\quad (f,g)\in\mathcal
G,\,\Psi\in\mathcal F,
\end{equation}
which we solve in $f,g$. We use expansions of the form
$$f(z,u+i(z^2\bar z+z\bar z^2))=f(z,u)+f_u(z,u)i(z^2\bar z+z\bar
z^2)+\frac{1}{2}f_{uu}(z,u)i^2(z^2\bar z+z\bar
z^2)^2+\cdot\cdot\cdot.$$ Substituting into \eqref{cm-eq} we get
the equation
\begin{multline}
\label{cm-eq1} i\left(g(z,u)+g_u(z,u)i(z^2\bar z+z\bar
z^2)+\frac{1}{2}g_{uu}(z,u)i^2(z^2\bar z+z\bar
z^2)^2+\cdot\cdot\cdot\right)+\\
 +(2z\bar z+\bar z^2)\left(f(z,u)+f_u(z,u)i(z^2\bar z+z\bar
z^2)+\frac{1}{2}f_{uu}(z,u)i^2(z^2\bar z+z\bar
z^2)^2+\cdot\cdot\cdot\right)+\\
+ \,\{\mbox{complex conjugate terms}\}=\Psi(z,\bar z,u).
\end{multline}
We expand $f(z,w)$ as $f=\sum\limits_{k\geq 0}f_k(w)z^k,$ and
similarly for $g$. Then, collecting in \eqref{cm-eq1} terms of
bi-degree $(k,0),\,k\geq 3$ in $z,\bar z$, we get
\begin{equation}
\label{k0} ig_k(u)=\Psi_{k0}(u).
\end{equation}
Gathering then terms of bi-degree $(k+1,1)$ with $k\geq 4$, we get
\begin{equation}
\label{k1}-g'_{k+1}(u)+2f_k(u)=\Psi_{k+1,0}(u).
\end{equation}
The equations \eqref{k0}, \eqref{k1} enable us to determine $g_k$
with $k\geq 3$ and $f_k$ with $k\geq 4$ uniquely. We then proceed
further with comparing terms of fixed bi-degree. Gathering terms
of bi-degrees $(1,0)$ and $(3,1)$ respectively, we get
\begin{eqnarray}
\label{1031} ig_1(u)=\Psi_{10}(u)\\
 -g'_1(u)+2f_2(u)=\Psi_{31}(u).\notag
 \end{eqnarray}
 The system
\eqref{1031} determines the pair $(g_1,f_2)$ uniquely. Further,
gathering $(1,1)$ terms, we get
$$4\re f_0(u)=\Phi_{11}(u),$$
so that only $\im f_0(u)$ needs to be determined. Gathering then
terms of bi-degrees $(2,0),\,(4,1)$ and $(3,2)$ respectively, and
separating real and imaginary parts, we obtain the system
\begin{eqnarray}
\label{204132}
 -\im g_2(u)=\re\Psi_{20}(u)\notag\\
  \re g_2(u)-\im
f_0(u)=\im\Psi_{20}(u)\notag\\
-\re g'_2(u)+2\re f_3(u)-\im f'_0(u)=\re\Psi_{41}(u)\\
 -\im g'_2(u)+2\im f_3(u)=\im\Psi_{41}(u)\notag\\
 -\re g'_2(u)+\re f_3(u)-3\im f'_0(u)=\re\Psi_{32}(u).\notag
\end{eqnarray}

The first and the fourth equations in \eqref{204132} determine the
unknowns $\im g_2,\im f_3$ uniquely. Differentiating the second
equation and considering it together with the third and the fifth
equations, we get a real linear system for $\re g'_2,\re f_3,\im
f'_0$ with a non-zero determinant, thus $\re g'_2,\re f_3,\im
f'_0$ are also determined uniquely. Since $f(0,0)=0$, we have
$f_0(0)=0$, and since $\Psi$ does not contain terms of weight $2$,
we have $\Psi_2(0)=0$, so that from the second equation we obtain
$\re g_2(0)=0$, and this determines  $\re g_2,\re f_3,\im f_0$
uniquely.

It remains to determine $g_0$ and $f_1$. Gathering terms of
bi-degrees $(0,0),(2,1)$ and $(4,2)$, and separating real and
imaginary parts, we obtain the system

\begin{eqnarray}
\label{002142}
 -2\im g_0(u)=\Psi_{00}(u)\notag\\
- 2\re g'_0(u)+3\re f_1(u)=\re\Psi_{21}(u)\\
 \im f_1(u)=\im\Psi_{21}(u)\notag\\
-\im g'_3(u)+\im g''_0(u)+\re f'_1(u)+\im
f_4(u)=\Im\Psi_{42}(u).\notag
\end{eqnarray}
 We then determine, respectively, $\im g_0$
from the first, $\im f_1$ from the third, $\re f'_1$ from the
fourth, and $\re g'_0$ from the second equations in
\eqref{002142}. Since we have $f_{1}(0)= g'_0(0)=1$, we determine
from here $\re f_1,\re g_0$ uniquely (where $f_{4}$ is already
determined from \eqref{k1}).

Thus the map $(f,g)$ is uniquely determined. Since the collection
of all $\re \Psi_{kl},\im \Psi_{kl}$ considered above must vanish
for $\Psi\in\mathcal N$, while all the remaining $\re
\Psi_{kl},\im \Psi_{kl}$ for for $\Psi\in\mathcal N$ can be
arbitrary, this proves the proposition.
\end{proof}

The formal version of Theorem 1  is proved now. Using the fact
that any transformation \eqref{dilations} preserves the
normalization conditions \eqref{normalform}, we can also formulate

\begin{corol}
[see also \cite{belsmall},\cite{kolar}] The group of formal
invertible transformations, preserving the germ at $0$ of the
model hypersurface \eqref{model}, consists of the dilations
\eqref{dilations}.
\end{corol}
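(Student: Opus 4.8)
The plan is to reduce the claim to the uniqueness already contained in Proposition~2.1, by observing that the model \eqref{model} is its own normal form and that the identity is the unique associated normalizing map. No new estimates or computations are needed, since the genuine analytic work has already been carried out in the proof of the proposition.

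First I would take a formal invertible $F=(f,g)$ fixing $0$ and preserving the germ of the model $M_0=\{v=2\re(z^2\bar z)\}$. Since $M_0$ is of the form \eqref{simplified} and $F$ carries $M_0$ to $M_0$, the tangency identity \eqref{tangency} applies with $\Phi=\Phi^*=2\re(z^2\bar z)$. Collecting terms of weights $1,2,3$ exactly as in the derivation of \eqref{dilations}--\eqref{normalizedmap} yields $g_1=g_2=0$, $f_1=\lambda z$, $g_3=\lambda^3 w$ with $\lambda=f_z(0,0)\in\mathbb{R}\setminus\{0\}$, and hence the decomposition $F=\tilde F\circ\Lambda$, where $\Lambda$ is a dilation \eqref{dilations} and $\tilde F\in\mathcal G$ satisfies \eqref{normalizedmap}.

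Next I would use that $\Lambda$ is itself an automorphism of $M_0$, so that $\Lambda^{-1}$ (again a dilation) is as well; consequently $\tilde F=F\circ\Lambda^{-1}$ also preserves $M_0$. Thus $\tilde F\in\mathcal G$ transforms $M_0$ into $M_0$, i.e.\ it sends the perturbation $\Phi-2\re(z^2\bar z)=0\in\mathcal F$ to the perturbation $\Phi^*-2\re(z^2\bar z)=0$. Since the zero perturbation trivially satisfies \eqref{normalform}, the target lies in $\mathcal N$, and therefore $\tilde F$ is a normalizing map for $M_0$ in the sense of the discussion following Proposition~2.1.

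The key step is now to invoke uniqueness. The identity map $(z,w)$ also lies in $\mathcal G$ and sends $M_0$ to itself, with target $0\in\mathcal N$. By the existence-and-uniqueness statement deduced from Proposition~2.1 (solving \eqref{CM} weight by weight, at each weight $m\ge 4$ determining $(f_{m-2},g_m,\Phi^*_m)$ uniquely subject to $\Phi^*_m\in\mathcal N$), the normalizing map of $M_0$ inside $\mathcal G$ is unique; hence $\tilde F$ equals the identity and $F=\Lambda$. I expect the only point requiring care to be the bookkeeping that identifies the model with the zero element of $\mathcal F$ and the identity with its normalizing map, so that Proposition~2.1 applies verbatim.
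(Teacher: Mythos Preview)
Your proposal is correct and follows essentially the same route as the paper: decompose $F=\tilde F\circ\Lambda$ with $\Lambda$ a dilation and $\tilde F\in\mathcal G$, observe that $\Lambda$ preserves the model (hence $\tilde F$ does too), and then invoke the uniqueness of the normalizing map in $\mathcal G$ furnished by Proposition~2.1 to conclude $\tilde F=\mathrm{id}$. The paper states this in a single sentence (``Using the fact that any transformation \eqref{dilations} preserves the normalization conditions \eqref{normalform}\ldots''), but the argument you spell out is exactly what is meant.
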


\begin{corol}
If $(N,0)$ and $(\tilde N,0)$ are two different normal forms of a
fixed germ $(M,p)$ with generic Levi degeneracy at $p$, then there
exists a linear transformation $\Lambda$, as in \eqref{dilations},
which maps $(N,0)$ into $(\tilde N,0)$.
\end{corol}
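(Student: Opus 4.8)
The plan is to reduce the statement to the uniqueness half of the formal normalization already obtained from Proposition 2.1. Let $F$ and $\tilde F$ be the normalizing transformations, so that $F(M)=N$ and $\tilde F(M)=\tilde N$, both $N$ and $\tilde N$ being of the normal form \eqref{normalform} and in particular of the form \eqref{simplified}. I would first pass to the composition $G:=\tilde F\circ F^{-1}$, a germ of a biholomorphism at $0$ carrying $(N,0)$ onto $(\tilde N,0)$. It then suffices to show that every such $G$, mapping one normal form into another, must be a dilation \eqref{dilations}.

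Since both $N$ and $\tilde N$ satisfy \eqref{simplified}, the map $G=(f,g)$ satisfies the tangency identity \eqref{tangency}, and collecting the terms of weights $1,2,3$ exactly as in the derivation preceding \eqref{normalizedmap} yields $g_1=g_2=0$, $f_1=\lambda z$ and $g_3=\lambda^3 w$ for some $\lambda\in\RR{}\setminus\{0\}$. Hence $G$ admits the canonical factorization $G=\tilde G\circ\Lambda$, where $\Lambda(z,w)=(\lambda z,\lambda^3 w)$ is the dilation \eqref{dilations} and $\tilde G\in\mathcal G$ satisfies \eqref{normalizedmap}. I would then set $N':=\Lambda(N)$; because every dilation \eqref{dilations} preserves both the form \eqref{simplified} and the normalization conditions \eqref{normalform} (the same observation underlying Corollary 2.2), the hypersurface $N'$ again lies in $\mathcal N$. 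Now $\tilde G\in\mathcal G$ carries $(N',0)$ into the normal form $(\tilde N,0)\in\mathcal N$, while the identity map, which also lies in $\mathcal G$, carries $N'$ into the normal form $N'\in\mathcal N$. By the uniqueness of the normalized pair $\bigl((f,g),\Phi^*\bigr)$ produced by the inductive solution of \eqref{CM} via Proposition 2.1, applied with source hypersurface $N'$, these two normalizations coincide, forcing $\tilde G=\mathrm{id}$ and $\tilde N=N'$. Consequently $\tilde N=\Lambda(N)$, i.e.\ the dilation $\Lambda$ maps $(N,0)$ onto $(\tilde N,0)$, as claimed.

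The weight bookkeeping in \eqref{tangency} and the verification that \eqref{dilations} preserves \eqref{normalform} are routine; the one point requiring care is the appeal to uniqueness. Here one must read off from Proposition 2.1 that the inductive scheme determines $(f_{m-2},g_m,\Phi^*_m)$ uniquely at each weight once the lower-weight data are fixed, so that two normalized maps in $\mathcal G$ sending the same source $N'$ into $\mathcal N$ must agree weight by weight, together with their normalized targets. This is precisely where the full strength of the direct-sum decomposition $\mathcal F=L(\mathcal G)\oplus\mathcal N$ is used, and it is the step I expect to be the main obstacle in making the argument rigorous.
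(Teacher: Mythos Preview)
Your argument is correct and is precisely the reasoning the paper leaves implicit: the corollary is stated without proof, immediately after the observation that dilations \eqref{dilations} preserve the normalization conditions \eqref{normalform}, and is meant to follow from the uniqueness built into Proposition~2.1 exactly as you lay out. Your factorization $G=\tilde G\circ\Lambda$, the remark that $\Lambda(N)\in\mathcal N$, and the weight-by-weight uniqueness forcing $\tilde G=\mathrm{id}$ are all the intended steps.
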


\subsection{Convergence of the normalizing transformation}

\mbox{}\\

Our next goal is to prove that the constructed normalizing
transformation $F=(f,g)$, satisfying \eqref{normalizedmap}, is in
fact convergent. We do that by presenting $F$ as a composition of
certain holomorphic transformations. Each of the transformation
has a clear geometric interpretation, that we address below. For
the set-up of the theory of Segre varieties see, e.g., \cite{ber}.

\medskip

\noindent \bf Canonical pair of foliations in the Levi degeneracy
set. \rm Let $\Sigma$ be the Levi degeneracy set of a
real-analytic hypersurface $M\subset\CC{2}$ with generic Levi
degeneracy at the point $0\in \Sigma\subset M$. We first define
the following slope (line) field in the totally real submanifold
$\Sigma$. Choose a point $p\in \Sigma$ and coordinates $(z,w)$
vanishing at $p$, where $M$ takes the form \eqref{simplified}.
Clearly these coordinates can be chosen polynomial with
coefficients depending analytically on $p$.
% and make a polynomial coordinate
%change, bringing $p$ to the origin and $M$ to the form \eqref{simplified} (that we denote by $M_S$).

Let $N$ denotes a (formal) normal form  \eqref{normalform} of $M$
at $p$, $F$ a formal transformation, mapping $(M,p)$ onto $(N,0)$,
and $e:=(0,1)\in\CC{2}$. We then define a slope at $p$ as follows:
$$l(p):=\mbox{span}_{\RR{}}\left\{(dF|_p)^{-1}(e)\right\}\subset T_{p}M.$$
It follows from Corollary 3.3 that the definition of $l$ is
independent of the choice of normal form. Moreover, the desired
slope can be also defined without using formal transformations.
Indeed, it follows from the normal form construction that, as soon
as the initial weighted polynomials
$\{\Phi^*_j,f_{j-2},g_j,\,4\leq j\leq m\}$ for some $m\geq 4$ have
been determined, they do not change after further normalization of
terms of higher weight. Hence, solving the equations \eqref{CM}
for $m$ sufficiently large, we uniquely determine
%the collection
%$\{f_2,f_3,g_4,g_5\}$ and thus determine
$dF|_p$. It is not difficult to see that the constructed slope
field is analytic (i.e., depends analytically on a point
$p\in\Sigma$). Indeed, the explicit construction in the beginning
of the section shows that each fixed weighted polynomial $\Phi_m$,
as in \eqref{simplified}, depends on $p$ analytically (this can be
verified from the parameter version of the implicit function
theorem). Hence polynomials $f_m$ and $g_m$ depends on $p$
analytically, as it is obtained by solving a system of linear
equations with fixed nondegenerate matrix in the left-hand and
right-hand side analytic in $p$ (the latter fact can be seen from
the proof of Proposition 2.1). We immediately conclude that
$dF|_p$ depends on $p$ analytically, and so does $l(p)$.

We then integrate the analytic slope field $l(p)$ and obtain a
canonical (non-singular) real-analytic foliation $\mathcal T$ of
the totally real manifold $\Sigma$ of Levi-degenerate points.

\begin{dfn}
We call each of the leaves of the foliation $\mathcal T$ \it a
degenerate chain. \rm
\end{dfn}

 Each degenerate chain
$\gamma\subset\Sigma$ at each point $p\in\gamma$ is transverse to
the complex tangent $T^{\CC{}}_pM$.

The second canonical foliation in $\Sigma$ corresponds to the
slope field $$c(p):=T^{\CC{}}_p M\cap T_p\Sigma.$$ Integrating
$c(p)$ we obtain another canonical foliation $\mathcal S$ in
$\Sigma$, which is everywhere tangent to $T^{\C}M$.
%The leaves of the foliation can be described explicitly as intersections $\{Q_p\cap\Sigma,\,p\in\Sigma\}$.
Both foliations $\mathcal T$ and $\mathcal S$ are transverse to
each other and are biholomorphic invariants of $(M,0)$. We call
them respectively {\em transverse} and {\em tangent} foliations.

\medskip

\noindent \bf Normalization of a chain and of the field of complex
tangent vectors along the chain. \rm We start the construction of
a convergent transformation, mapping a germ $(M,0)$, as in
\eqref{simplified}, onto a germ $(N,0)$, as in \eqref{normalform},
by choosing the unique degenerate chain $\gamma\subset\Sigma$,
passing through $0$. Let us denote by $s(p)$ the leaf of the
tangent foliation $\mathcal S$, passing through a point
$p\in\Sigma$. As $\Sigma$ is totally real in $\CC{2}$, we may
perform a local holomorphic coordinate change near $0$ in such a
way that the form \eqref{simplified} is preserved, the Levi
degeneracy set $\Sigma$ is transformed into
\begin{equation}
\label{flat} \Pi=\{\re z=0,\,\im w=0\},
\end{equation}
the degenerate chain $\gamma$ into
\begin{equation}
\label{Gamma}\Gamma=\{z=0,\,v=0\}\subset\Pi,
\end{equation}
and each leaf $s_p$, $p\in\gamma$ of the tangent foliation near
$0$ into $\{\re z=0,w=b\},\,b\in\RR{}$. Thus for the transformed
hypersurface $M^*$, all complex tangent spaces
$T_p^{\CC{}}M,\,p\in\Gamma$, are of the form $\{w=0\}$. Since
$\Sigma$ is a plane for $M^*$, we have
$$\Phi^*_{11}(u)=0,$$
expressing the fact that the Levi form of $M$ vanishes along
$\Gamma$. Furthermore, since $M^*$  contains $\Gamma$, we also
obtain the condition $\Phi^*_{00}(u)=0$. In what follows we
consider only transformations, preserving $\Gamma$ (we will prove
later that when $M$ is already in the normal form, then the curve
$\Gamma$ coincides with the degenerate chain, passing through
$0$). Note that in the normal form \eqref{normalform} the Levi
degeneracy set $\Sigma$ \it is not necessarily flat, \rm as in
\eqref{flat}, and neither are the leaves $s_p,\,p\in\Gamma$.
However, all the complex tangents $T_p^{\CC{}}M,\,p\in\Gamma$,
remain $\{w=0\}$, and the condition $\Phi^*_{11}(u)=0$ is also
preserved.

\medskip

\noindent \bf Normalization of Segre varieties along a chain. \rm
The next step in the normalization procedure is the elimination of
$(k,0)$ terms in the expansion of $\Phi$, which is sometimes
addressed as transfer to \it normal coordinates \rm (see
\cite{ber}). Geometrically, this step can be interpreted as
straightening of the Segre varieties $Q_p,\,p\in\Gamma$. According
to \cite{chern}, \cite{ber}, we perform the unique transformation
of the form
\begin{equation}
\label{kill-k0} z^*=z,\quad w^*=w+g(z,w), \quad g(0,w)=0,
\end{equation}
which maps $M$ into a hypersurface with $\Phi^*_{k0}=0,\,k\geq 0$.
This transformation preserves the curve $\Gamma$. One needs only
to take control of the fact that the conditions $\Phi_{11}(u)=0$
and $\Phi_{21}(0)=1$ are preserved. Indeed, one clearly has
$\Phi^*_{11}(u^*)= 0$, since the Levi form of $M^{*}$ vanishes
along $\Gamma$.
%then $\Phi^*_{k0}=0$
%for all $k\ge0$ implies that the Levi degeneracy set $\Pi^*$ is
%given by the conditions
%$$\Phi^*_{11}(u^*)+2\Phi^*_{21}(u^*)z^*+2\Phi_{12}^*(u^*)\bar
%z^*+O(|z^*|^2)=0,\quad
%v^{*}+O(|z^*|^2)=0.$$
%On the other hand, in view of \eqref{kill-k0}
%we get
%$$\Pi^*=\{v^*+O(|z^*|)=0,\,\re
%z^*=0\}.$$ Thus we conclude that $\Phi^*_{11}(u^{*})\equiv 0$.
The substitution $z^*=z,\,w^*=u+i\Phi(z,\bar z,u)+O(|z|)$ into
$v^*=\Phi^*(z^*,\bar z^*,u^*)$ also shows $\Phi^*_{21}(0)=1$.

\medskip

\noindent \bf Normalization of the Segre map. \rm This step can be
interpreted as a normalization of the Segre map. The latter one,
since Segre varieties are determined by their $2$-jets, can be
regarded as a map $p\mapsto j^2Q_p|_{z=0}$ of $(\CC{2},0)$ into
the $2$-jet space of Segre varieties at the point with $z=0$. We
normalize the Segre map in this step by the condition
\begin{equation}
\label{Segre}p=(\xi,\eta)\mapsto
\left(\bar\eta,2i\bar\xi^2,4i\bar\xi+O(\bar\xi^2)\right).
\end{equation}

Let us introduce the subspace $\mathcal D\subset\mathcal F$, which
consists of all convergent power series of the form
$$\sum\limits_{k,l\geq 2}\Psi_{kl}(u)z^k\bar z^l.
$$
Our goal is to bring a hypersurface $M$, obtained in the previous
step, to such a form that all terms of weight $\geq 4$ in $\Phi$
belong to the space $\mathcal D$, i.e., to bring the defining
equation to the form
\begin{equation}
\label{prenormal} v=P(z,\bar z) \mod \mathcal D.
\end{equation}
Thus the subspace of terms to be removed from $\Phi$ in this step
is transverse to $\mathcal D$, and adding to $\Phi$ an element of
$\mathcal D$ does not change the desired form of it, that is why
it is convenient for us to use identities modulo $\mathcal D$.

Consider a hypersurface $M$, obtained in the previous step. We
first make $\Phi_{21}$ independent of $u$. Let $\lambda(u)$ be an
analytic function with $\lambda(0)=1$ such that
$\lambda^2(u)\bar\lambda(u)=\Phi_{21}(u)$. We perform the
biholomorphic change
$$z^*=z\lambda(w),\,w^*=w.$$
Using expansions of the form $h(u+iv)=h(u)+ih'(u)v+...$, we
compute
$$
\Phi(z,\bar z,u)=\Phi^*(z^*,\bar z^*,u^*)=\Phi^*(z\lambda(u),\bar
z\bar\lambda(u),u)\quad\mbox{mod}\,\mathcal D,
$$
provided $(z,w)\in M$ (recall that all $(k,0)$-terms are removed
from $\Phi$). Thus
$$\Phi_{21}(u)=\Phi^*_{21}(u)\lambda^2(u)\bar\lambda(u),$$
so that $M$ is mapped into a hypersurface of the form
$$
v=2\re\left(z^2\bar z\right)+\sum\limits_{k,l\geq 1,\,k+l\geq
4}\Phi^*_{kl}(u)z^k\bar z^l, $$ as required. Clearly, $\Gamma$ is
preserved and $\Phi^*_{11}=0$.

 Second, for the hypersurface $M$, obtained in the
previous step, we remove all $(k,1)$-terms in $\Phi$ with $k\geq
3$ by a transformation
$$z^*=z+f(z,w),\quad w^*=w, \quad f(z,w)=O(|z|^2).$$
We have
\begin{equation}
\label{map} \Phi^{*}(z+f(z,u+i\Phi(z,\bar z,u)), \bar z +
\overline{f(z,u+i\Phi(z,\bar z,u))}, u) = \Phi(z,\bar z, u).
\end{equation}
Furthermore $\Phi^{*}_{21}(u)=\Phi_{21}(u)$ and
$\Phi^{*}_{11}(u)=\Phi_{11}(u)=0$.

We expand the defining function $\Phi$ as
$$\Phi(z,\bar
z,u)=2\re\left(z^2\bar z+z^2\phi(z,u)\bar
z\right)\quad\mbox{mod}\,\mathcal D$$
 for
an appropriate $\phi(z,u)=O(|z|)$.  Put $f=:zh$. We compute
$$(z+f)^2(\bar z+\bar f)=z^2\bar
z(1+2h+h^2)\quad\mbox{mod}\,\mathcal D.$$ We then determine
$h(z,u),f(z,u)$ from the functional equation
\begin{equation}
\label{f21}
 2h+h^2= \phi(z,u),
\end{equation}
%as $f=-z+z\sqrt{1+\frac{A(z,u)}{z^2F_{21}(u)}}=O_{20}$. This means
expressing the condition that $M$ is mapped into a hypersurface
$M^{*}$ given by
 \eqref{prenormal}.
Now suitable $h$ can be obtained by the implicit function theorem.
%It is easy to see that
%$\Gamma$ is also preserved.
%The new defining function is
%$$\Im w - \Phi(z+f, \bar z+\bar f, u),$$
%which is $O_{22}$ when restricted to $\Pi$. The fact that
%$z^*=z+O(|z|^2),w^*=w$ shows $\Phi^*_{11}=0$ and $\Phi_{21}^*(0)=1$.
 Note that the required transformation,
removing the $(k,1)$ terms, is unique.
  It is straightforward that the Segre map is
given by \eqref{Segre}.

\medskip

\noindent \bf  Fixing a parametrization for a chain. \rm We claim
now that $\re\Phi_{32}(u)=0$ in \eqref{prenormal}. Indeed,
consider the (formal) transformation $F=(f,g)$, bringing a
hypersurface \eqref{prenormal} into normal form, and study the
equation \eqref{tangency}, applied to it. Collecting terms with
$z\bar zu^l,\,l\geq 0$, we first obtain $\re f_0(u)=0$. Next,
gathering terms with $z^2\bar z^0u^1,\,z^4\bar z^1u^0,\,z^3\bar
z^2u^0$ and separating the real and imaginary parts, it is
straightforward to check that we obtain precisely the equations
\eqref{204132} (and also the differentiated second equation in
\eqref{204132}), evaluated at $u=0$, with
$\Psi_{20}(0)=\Psi'_{20}(0)=\Psi_{41}(0)=0$ and
$\re\Psi_{32}(0)=2\re\Phi_{32}(0)$ (this follows from the partial
normalization \eqref{prenormal} of $M$ and the normalization
conditions \eqref{normalform} for the target $M^*$). Moreover,
$\im f_0(0)=\im f'_0(0)=0$ thanks to the fact that $\Gamma$ is a
degenerate chain. We immediately conclude from \eqref{204132} that
$\re g_2(0)=\re g'_2(0)=\im g'_2(0)=\re f_3(0)=\im f_3(0)=0$ and
$\re\Phi_{32}(0)=0$. Since the prenormal form \eqref{prenormal} is
invariant under the real shifts $w\mapsto w+u_0,\,u_0\in\RR{}$,
and $\Gamma\subset M$ is a degenerate chain, we similarly conclude
that $\re\Phi_{32}(u_0)= 0$ for any small $u_0\in\RR{}$ in
\eqref{prenormal}, as required.

It remains to achieve the last normalization condition $\im
\Phi_{42}(u)=0$, using an appropriate \it gauge \rm transformation
\begin{equation}
\label{gauge} z\mapsto f(w)z,\quad w\mapsto g(w),\quad f(0)\neq
0,\,g'(0)\neq 0.
\end{equation}
We do so by choosing a gauge transformation
$$z^*=z(q'(w))^{1/3},\,w^*=q(w)$$ for an appropriate
$q(w)$ with $\im q(u)=0,\,q(0)=0,q'(0)\neq0$, which can be
interpreted as a choice of parametrization for the degenerate
chain $\gamma$, determined in the previous step. We apply the
above transformation to a hypersurface, satisfying
\eqref{prenormal} and $\re\Phi_{32}(u)=0$. Recall that $\mathcal
N$ denotes the space of power series in $z,\bar z,u$ of weight
$\geq 4$, satisfying the normalization conditions
\eqref{normalform}. Then, using expansions of kind
$h(u+iv)=h(u)+ih'(u)v+\dots,$ we compute:
\begin{gather*}
v^*=q'(u)v\quad\mbox{mod}\,\mathcal N,\\
{z^*}^2\bar z^*=z^2\bar z
q'(u)\left(1+i\frac{q''(u)}{q'(u)}v)\right)^{2/3}\left(1-i\frac{q''(u)}{q'(u)}v)\right)^{1/3}\quad\mbox{mod}\,\mathcal
N =\\
=q'(u)z^2\bar z+\frac{i}{3}q''(u)z^4\bar
z^2\quad\mbox{mod}\,\mathcal N,\\
v^*-2\re({z^*}^2\bar z^*)=q'(u)(v-2\re(z^2\bar
z))+\frac{i}{3}q''(u)(z^4\bar z^2-z^2\bar
z^4)\quad\mbox{mod}\,\mathcal N.
\end{gather*}
We conclude that $\im \Phi^*_{42}=q'\im \Phi_{42}+\frac{1}{3}q''$,
so that the condition $\im \Phi^*_{42}(u)=0$ leads to a second
order \it nonsingular \rm ODE. Solving it with some initial
condition $q'(0)\neq 0$, we finally obtain a hypersurface of the
form \eqref{prenormal}, satisfying $\re \Phi_{32}(u)=\im
\Phi_{42}(u)=0$, as required for the complete normal form. It is
not difficult to see, performing similar calculations, that the
gauge transformation chosen to achieve $\im \Phi_{42}=0$ must have
the above form $z^*=z(q'(w))^{1/3},\,w^*=q(w)$ and hence is unique
up to the choice of the real parameter $q'(0)$, corresponding to
the action of the group \eqref{dilations}. Thus, remarkably,\it

\smallskip

we can canonically, up to the action of the group of dilations
\eqref{dilations}, choose a parametrization on each degenerate
chain.\rm
\medskip

Theorem 1 is completely proved now. The proof shows also that in
the normal form \eqref{normalform} the unique degenerate chain,
passing through the origin, is given by \eqref{Gamma}. In
addition, we can see that

\smallskip

\it a transformation, bringing a germ of a real-analytic
hypersurface $M\subset\CC{2}$ with generic Levi degeneracy at $0$
into normal form, is completely determined by fixing a tangent
vector at $0$ to the degenerate chain, passing through $0$. \rm

\smallskip

\section{The higher dimensional case}
%We shall use upper and lower indices for the coordinates denoting the same thing:
%$$z_j=z^j, \quad j=1,\ldots,n.$$

Let $M\subset\CC{n+1},\,n\geq 2$ be a real-analytic hypersurface
with generic Levi degeneracy at $0$. Choosing local coordinates
for $M$ as discussed in Section 2, and performing an additional
polynomial transformation removing harmonic terms of degrees $2$
and $3$, we can present $M$ locally near the origin as

 \begin{equation}
\label{initialn} v =\left\langle \check z,\overline{\check
z}\right\rangle +2\re\left( z_n^2\bar z_n\right) +O\bigl(|\check
z|^2|z_n|+|z|^4+u|z|+u^2\bigr).
%+o(|\check z|^2+|z_n|^3+|w|)
%+O\left(|\check z|^2|z_n|\right)+O\left(|z|^4\right)+O\left(|z||w|\right)+O\left(|w|^2\right)
\end{equation}
(compare with the initial simplification in \cite{ebenfeltjdg}).
%Compared to the simplified form
%in \cite{ebenfeltgdj}, we allow here terms $O\left(|z||w|\right)$,
%which is convenient for us.

\subsection{Formal normalization in the higher dimensional case}

A crucial step in the construction of a normal form is again a
good choice of weights for a hypersurface given by
\eqref{initialn}. We introduce weights as follows:

$$[\check z]:=3,\quad [z_n]:=2,\quad [w]:=6.$$ Then the hypersurface
\begin{equation}
\label{modeln} v=P(z,\bar z),
\end{equation}
where
$$P(z,\bar z):=\left\langle \check z,\overline{\check
z}\right\rangle+2\re\left(z_n^2\bar
z_n\right)=\sum\limits_{j=1}^{n-1}\varepsilon^j\bigl|z_j\bigr|^2+2\re\left(
z_n^2\bar z_n \right)$$ is a weighted homogeneous polynomial of
weight $6$, becomes a "model"\, for the class of hypersurfaces
\eqref{initialn} that can be written as
\begin{equation}
\label{simplifiedn} v=P(z,\bar z)+\sum_{m\geq 7}\Phi_m(z,\bar
z,u),
\end{equation}
where each $\Phi_m(z,\bar z,u)$ is homogeneous of weight $m$.

Let now $M=\{v=\Phi(z,\bar z,u)\}$ and $M^*=\{v^*=\Phi^*(z^*,\bar
z^*,u^*)\}$ be two hypersurfaces in $\CC{n+1}$, satisfying
\eqref{initialn}, and $z^*=f(z,w),\,w^*=g(z,w)$  a formal
invertible transformation, transforming $(M,0)$ into $(M^*,0)$. We
have the identity
\begin{equation}
\label{tangencyn} \im g(z,w)|_{w=u+i\Phi(z,\bar z,u)}
=\Phi^*(f(z,w),\overline{f(z,w)},\re g(z,w))|_{w=u+i\Phi(z,\bar
z,u)}.
\end{equation}
For a mapping $(f,g)=(f^1,...,f^n,g):\, (\CC{n+1},0)\mapsto
(\CC{n+1},0)$ we group the first $n-1$ components in the vector
function
$$\check
f=(f^1,...,f^{n-1}).$$ We decompose the mapping into a sum of
weighted homogeneous polynomials as
$$f=\sum_{m\geq 2} f_m(z,w),\quad \check f=\sum_{m\geq 2} \check f_m(z,w),\quad g=\sum_{m\geq 2} g_m(z,w)$$ and consider terms
of a fixed weight in \eqref{tangencyn}. Collecting terms of the
low weights $2,...,5$, we get in view of \eqref{initialn},
$$\check f_2=g_2=...=g_5=0.$$
 For the
weight $6$ we have

$$\check f_3=\lambda^3 C\check z,\, f^n_2=\rho\lambda^2 z_n,
\,g_6=\rho\lambda^6 w,$$ where $$\left\langle C\check
z,\overline{C\check z}\right\rangle=\rho\left\langle \check
z,\overline{\check z}\right\rangle,\quad \lambda>0,\quad
C\in\mbox{\sf GL}(n-1,\CC{}),\quad\rho\in\{1,-1\}$$ (in fact, for
$r=n-1$ we have $\rho=1$ only).  Thus the initial transformation
$F=(f,g)$ can be uniquely decomposed as $F=\tilde F\circ\Lambda$,
where
\begin{eqnarray}
\label{dilationsn} \Lambda(z,w)&=&(\lambda^3C\check
z,\rho\lambda^2
z_n,\rho\lambda^6 w), \notag\\
\left\langle C\check z,\overline{C\check
z}\right\rangle=\rho\left\langle \check z,\overline{\check
z}\right\rangle,\quad C&\in&\mbox{\sf
GL}(n-1,\CC{}),\quad\lambda>0,\quad\rho\in\{1,-1\}
\end{eqnarray}
 is
an automorphism of the model \eqref{modeln}, and the weighted
components of $\tilde F$ satisfy
\begin{equation}
\label{normalmap} \check f_2 = 0, \quad \check f_3=\check z,\quad
f^n_2=z_n, \quad \quad g_2=...=g_5=0,\quad g_6=w.
\end{equation}
Thus in what follows we only consider maps, satisfying
\eqref{normalmap}. Proceeding further with weighted identities
arising from \eqref{tangencyn}, and using \eqref{simplifiedn}, we
obtain, similarly to the two-dimensional case
\begin{equation}
\label{CMn}\re\Bigl(ig_{m}+\langle \check f_{m-3},\overline{\check
z}\rangle+\left(2z_n\overline{z_n}+(\overline{z_n})^2\Bigr)f^n_{m-4}\right)\left|_{w=u+iP(z,\bar
z)}\right.=\Phi^*_m-\Phi_m+\cdot\cdot\cdot,
\end{equation}
for any fixed weigh $m\geq 7$, where dots stand for a polynomial
in $z,\bar z,u$ and
 $\check f_{j-3}(z,w),f^n_{j-4}(z,w),g_j(z,w)$ with $j<m$,
 with $w=u+iP(z,\bar z)$.

We now decompose the space $\mathcal F^n$ of formal real-valued
power series
$$\Phi(z,\bar z,u)=\sum_{m\geq 7}\Phi_m(z,\bar z,u),$$
satisfying \eqref{initialn}, into  the direct some of the image
$\mathcal R^n$ of the operator on left-hand side \eqref{CMn},
called the {\em Chern-Moser operator} associated to $P$:
$$
L^n(f,g):= \re(ig + P_zf)|_{w=u+iP(z,\bar z)}
 =  \re\Bigl(ig+\langle \check f,\overline{\check
z}\rangle+\left(2z_n\overline{z_n}+(\overline{z_n})^2\Bigr)f^n\right)\left
|_{w=u+iP(z,\bar z)}\right. ,$$ acting on formal power series
$$(f,g)\in \C[[z,w]]^n \times \C[[z,w]],$$
and an appropriate normal subspace $\mathcal N^n\subset\mathcal
F^n$. Let also $\mathcal G^n$ denotes the space  of all
$(n+1)$-tuples $(f,g)$ of power series without constant terms,
satisfying \eqref{normalmap}. As was already discussed in Section
3, in order to prove the formal version of Theorem 2,
%one has to
%show that the range of the linear operator, standing in the
%left-hand side of \eqref{CMn}, can be used as the subspace
%$\mathcal R$, so that
it remains to prove:

\begin{propos}
For the  Chern-Moser operator $L^n$,
 we have the direct decomposition
 $$\mathcal
F^n=L^n(\mathcal G^n)\oplus\mathcal N^n,$$ where $\mathcal
N^n\subset\mathcal F^n$ is the subspace of series, satisfying the
normalization conditions \eqref{normalformn}.
\end{propos}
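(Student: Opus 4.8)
The plan is to mirror, step for step, the structure of the two-dimensional argument. First I would note that the asserted decomposition is equivalent to the statement that every $\Psi \in \mathcal{F}^n$ admits a unique representation $\Psi = L^n(f,g) + \Phi^*$ with $(f,g) \in \mathcal{G}^n$ and $\Phi^* \in \mathcal{N}^n$; uniqueness of this representation gives directness of the sum (if $L^n(f,g) \in \mathcal{N}^n$, comparing the two representations of that element forces $(f,g) = 0$ and the element to vanish), while existence gives that the sum spans. Since $L^n$ preserves the weight grading of \eqref{simplifiedn} and $\mathcal{F}^n, \mathcal{G}^n, \mathcal{N}^n$ are all graded, it suffices to solve this weight by weight, so for each fixed $m \geq 7$ one is left with the finite-dimensional linear system \eqref{CMn}. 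As in the planar case I would clear denominators by working with $2L^n$.

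Next I would substitute $w = u + iP(z,\bar z)$, Taylor-expand every component of $f$ and $g$ through $h(u+iv) = h(u) + ih'(u)v + \cdots$ with $v = P$, expand $\check f, f^n, g$ as power series in $(\check z, z_n)$, and collect terms of each fixed multidegree $(k',l')$ in $(\check z, \overline{\check z})$ and $(k,l)$ in $(z_n, \overline{z_n})$, exactly as the bidegrees were gathered in two dimensions. The operator then splits into two mechanisms that, to leading order within each multidegree, act independently: the summand $\langle \check f, \overline{\check z}\rangle$ couples to the Levi-nondegenerate $\check z$-directions and reproduces, multidegree by multidegree, the classical Chern--Moser operator attached to the Hermitian form $\langle \cdot,\cdot\rangle$, while $(2z_n\overline{z_n}+\overline{z_n}^2)f^n$ couples to the degenerate $z_n$-direction and reproduces the two-dimensional operator $L$ already analyzed. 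The two interact only through terms of genuinely mixed $(\check z, z_n)$-degree.

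The normalization then proceeds hierarchically. Collecting the pure and top-$z_n$-degree terms determines $g$ and the components of $f^n$, just as \eqref{k0} and \eqref{k1} do in the plane. For the $\check z$-coupling I would invoke the algebraic properties of the trace operator $\tr$ on bihomogeneous polynomials in $(\check z, \overline{\check z})$---its surjectivity onto lower bidegrees and the description of its kernel by harmonic polynomials, exactly as in Chern--Moser \cite{chern}---to solve for $\check f$; the residual obstructions produced here are precisely $\tr\Phi_{1111}$ and $\im\tr\Phi_{1211}$. Finally, the lowest-weight and mixed terms, after separating real and imaginary parts in the manner of \eqref{204132} and \eqref{002142}, give small real linear systems whose determinants I expect to be nonzero; these pin down the remaining coefficients of $(f,g)$ and leave exactly the two reality conditions $\Phi_{1102}=0$ and $\re\Phi_{1211}=0$.

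The hard part will be the bookkeeping of the mixed multidegrees and the verification that the residual free parameters of $\Psi$ are exactly the four functions in \eqref{normalformn}. Concretely one must check (a) the surjectivity and injectivity of $\tr$ on the precise $\check z$-polynomial spaces that arise, the higher-dimensional Chern--Moser ingredient; (b) the nonvanishing of the determinants of the finite real systems transplanted from the $z_n$-direction, the two-dimensional ingredient; and (c) that the cross terms of genuinely mixed $(\check z, z_n)$-degree feed back only into unknowns already determined at earlier stages, so that no circular dependence spoils unique solvability. Establishing (c)---that the combined system stays triangular with respect to a suitable ordering of the multidegrees---is, I expect, the most delicate point.
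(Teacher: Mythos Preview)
Your overall framework---reformulating as unique solvability of $2L^n(f,g)=\Psi$ modulo $\mathcal N^n$, reducing weight by weight, Taylor-expanding in $v=P(z,\bar z)$, and collecting by multidegree $(k',k,l',l)$---is exactly the paper's. Where you diverge is in the organizing principle, and there your ``Chern--Moser $\oplus$ two-dimensional'' picture is misleading and your attribution of the four normalization conditions is inverted.

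Concretely: the trace conditions $\tr\Phi_{1111}=0$ and $\im\tr\Phi_{1211}=0$ do \emph{not} arise from solving for $\check f$ via Chern--Moser harmonic decomposition. In the paper they arise from determining the \emph{scalar} components $\im(f^n_{00})'$ and $(f^n_{01})'$ of $f^n$; one applies $\tr$ to the bidegree-$(1,1)$ equations \eqref{findfn00} and \eqref{complicated} merely to extract a single scalar equation from a Hermitian-form identity. Conversely, $\Phi_{1102}=0$ and $\re\Phi_{1211}=0$, which you place on the $z_n$-side, actually come from determining $\check f_{00}$ and the matrix coefficients $\check f_{\alpha 0}$, $|\alpha|=1$, via \eqref{starting} and \eqref{system1}. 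No harmonic decomposition appears anywhere: the only $\check z$-bidegree requiring $\tr$ is $(1,1)$, and there $\tr$ just picks out the trace of a Hermitian matrix. The full Chern--Moser apparatus (kernel by harmonics, surjectivity onto lower bidegrees) is never invoked because higher $\check z$-bidegrees are handled by the elementary fact that $\langle\cdot,\bar{\check z}\rangle$ is nondegenerate.

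The paper's proof is instead a direct, carefully ordered walk through specific multidegrees in which the $\check f$- and $f^n$-determinations are interleaved rather than separated: first $(0,1,0,1)$ and $(1,1,1,1)$ fix $f^n_{00}$; then $(1,0,0,0)$ and $(1,1,0,2)$ fix $\check f_{00}$ and the linear $g_{\alpha 0}$; then the holomorphic terms fix the rest of $g$; then the $\bar{\check z}$-linear and $\bar z_n$-linear terms \eqref{holbar1}, \eqref{holbar2} fix most of $\check f$ and $f^n$; finally the coupled system \eqref{complicated}--\eqref{0201} in bidegrees $(1,0,1,0)$, $(0,2,0,1)$, $(1,2,1,1)$ determines $\re g'_{00}$, $f^n_{01}$, and the matrix $\check f_{\alpha 0}$ simultaneously. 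The triangularity you anticipate in (c) is real, but its ordering is not ``$\check f$ then $f^n$'', and trying to impose that split would obscure exactly the coupled system where the four normalization conditions emerge.
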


\begin{proof}
We have to prove that an equation
\begin{equation}
\label{cm-eq-n} 2L^n(f,g)=\Psi(z,\bar z,u), \quad (f,g)\in\mathcal
G^n,
\end{equation}
has a unique solution in $(f,g)$, modulo $\mathcal N^n$ in the
right-hand side, for any fixed $\Psi\in\mathcal F^n$.  We expand
\eqref{cm-eq-n} as
\begin{multline}
\label{cmeqn} i\left(g(z,u)+g_u(z,u)i P(z,\bar
z)+\frac{1}{2}g_{uu}(z,u)i^2(P(z,\bar z))^2+\cdot\cdot\cdot\right)\\
 +\Bigl\langle \check f(z,u)+\check f_u(z,u)iP(z,\bar z)+\dots,\overline{\check z}\Bigr\rangle+
 (2z_n\overline{z_n}+(\overline{z_n})^2)\Bigl(f^n(z,u)+f^n_u(z,u)iP(z,\bar z)+\cdot\cdot\cdot\Bigr)+\\
 \qquad+ \{\mbox{complex conjugate terms}\}=\Psi(z,\bar
 z,u),
\end{multline}
and each $f^s(z,w),\,1\leq s\leq n$, as
$$f^s=\sum f^s_{\a l}(w)\check
z^{\a}z_n^l, \quad \check z^\a = z_1^{\a_1} \cdots
z_{n-1}^{\a_{n-1}}, \quad
\a=(\a_1,\ldots,\a_{n-1})\in\left(\mathbb{Z}_{\geq
0}\right)^{n-1},\quad l\in\mathbb{Z}_{\geq 0},$$ and similarly for
$g$. We expand the right-hand side as
\begin{gather*}
\Psi(z,\bar z,u)
=\sum \Psi_{k'kl'l}(\check z,\1{\check z},u)z_n^k\bar z_n^l, \quad %(\check z)^k
%(z_n)^l(\overline{check z})^\alpha (\overline{z_n})^\beta,\quad
k',k,l',l\in\mathbb{Z}_{\geq 0},
\end{gather*}
where each $\Psi_{k'kl'l}$ is bi-homogeneous of bi-degrees
$(k',l')$ in $(\check z,\overline{\check z})$ respectively. We are
looking for the normal space $\6N^n$ of the form
$$\6N^n=\bigoplus\6N_{k'kl'l},
\quad \6N_{k'kl'l}\subset \bigl\{\Psi(z,\bar z,u) =
\Psi_{k'kl'l}(\check z,\1{\check z},u)z_n^k\bar z_n^l\bigr\}.$$

We start by considering certain ``mixed"\, terms in the basic
equation \eqref{cmeqn} in order to simplify further calculations.
First, we collect terms of the form $\CC{}[[u]]z_n\bar z_n$ in
\eqref{cmeqn} and get
\begin{equation}\label{fn00}
4\re f^n_{00}(u)=\Psi_{0101}(u).
\end{equation}
Collecting then in \eqref{cmeqn} all terms of the form
$\CC{}[[u]]\check z^\a \overline{\check z}^\b z_n\bar
z_n,\,|\a|=|\b|=1$, and applying the $\mbox{\bf tr}$ operator, we
obtain
\begin{equation}
\label{findfn00}-4\im (f^n_{00})'(u)=\frac{1}{n-1}\mbox{\bf
tr}\,\Psi_{1111}.
\end{equation}
Thus $\im (f^n_{00})'(u)$ is determined uniquely. In view of
$f^n_{00}(0)=0$, there exists unique $f^n_{00}(u)$ satisfying
\eqref{fn00} and \eqref{findfn00}.
%which implies that we can always achieve
%\begin{equation}
%\Phi_{0101}=\tr\Psi_{1111}=0
%\quad \Longrightarrow
%\quad f^n_{00}(u)=0.
%\end{equation}
Consequently we can set
\begin{equation}
\6N_{0101} = 0, \quad \6N_{1111} = \ker\tr,
\end{equation}
for the components of the normal space $\mathcal N^n$.

Second, we collect in \eqref{cmeqn} all the terms $\CC{}[u]\check
z^\a,\,|\a|=1$, as well as terms of the form $\CC{}[u]\check z^\a
z_n\bar z_n^2,\,|\a|=1$, and obtain the system
\begin{eqnarray}
\label{starting}
 i\sum_{|\a|=1}g_{\a0}(u)
\check z^\a+\langle \check z,\overline{\check
f_{00}}(u)\rangle=\Psi_{1000},\\
-\sum_{|\a|=1}g'_{\a0}(u) \check z^\a-i\langle \check
z,\overline{\check f'_{00}}(u)\rangle =\Psi_{1102}.\notag
\end{eqnarray}
As the form $\langle\cdot,\cdot\rangle$ is nondegenerate,
\eqref{starting} enables us to determine $\check
f'_{00}(u),g'_{\a0}(u),\,|\a|=1$, uniquely. Since we have $\check
f_{00}(0)=0$, and also $g_{\a0}(0)=0$ from \eqref{normalmap},
$|\a|=1$, equations \eqref{starting} completely determine $\check
f_{00}(u),g_{\a0}(u),\,|\a|=1$. Thus we can set
\begin{equation}
\6N_{1000}=\6N_{1102}=0.
\end{equation}

Having some of the initial terms of the mapping $(f,g)$
determined, we proceed further with solving the equation
\eqref{cmeqn}. Collecting all terms of the form $\CC{}[[z,u]]$, we
get
\begin{equation}
\label{hol} ig(z,u)-ig_{00}(u)+\langle \check z,\overline{\check
f_{00}}(u)\rangle+z_n^2\overline{f^n_{00}}(u)=\Psi(z,0,u).
\end{equation}
Since $f^n_{00}(u)$ is already determined, the equation
\eqref{hol} enables us to determine all $g_{\a l}(u)$ uniquely,
except  $\re g_{00}$ (while $\im g_{00}$ is also uniquely
determined). Keeping the latter conclusion in mind, we collect in
\eqref{cmeqn} all terms of the form $\CC{}[[z,u]]\overline{\check
z}^\a,\,|\a|=1$, to obtain
\begin{multline}
\label{holbar1} -g_u(z,u)\langle\check z,\overline{\check
z}\rangle +\langle \check f(z,u), \overline{\check z}\rangle
-i\sum_{|\a|=1}\overline{g_{\a0}}(u)\overline{\check z}^\a
-\overline{g'_{00}}(u)\langle\check z,\overline{\check z}\rangle
-i\sum_{|\a|=1}\overline{g_{\a0}}(u)
\overline{\check z}^\a+\\
+\sum_{|\a|=1}\langle\check z,\overline{\check f_{\a0}}(u)
\overline{\check z}^\a \rangle-i\langle \check z,\overline{\check
f_{00}}(u)\rangle\langle \check z,\overline{\check z}\rangle
+z_n^2\sum_{|\a|=1}\overline{f^n_{\a0}}(u) \overline{\check z}^\a
-i z_n^2\overline{(f^n_{00})'}(u)\langle \check z,\overline{\check
z}\rangle=\sum_{k',k\geq 0}\Psi_{k'k10}z_n^k,
\end{multline}
and also collect all terms of the form $\CC{}[[z,u]]z_n\bar z_n$
to obtain
\begin{multline}
\label{holbar2} -g_u(z,u)z_n^2\bar z_n +2z_n\overline{z_n}f^n(z,u)
-i\overline{g_{01}}(u)\overline{z_n}
-\overline{g'_{00}}(u)z_n^2\bar z_n
%+\langle\check z,\overline{\check f_{01}}(u)\rangle  \bar z_n-
-i\langle \check z,\overline{\check f'_{00}}(u)\rangle z_n^2
\bar z_n\\
+2\overline{f^n_{00}}(u)z_n\overline{z_n}
+\overline{f^n_{01}}(u)z_n^2\bar z_n -i\overline{(f^n_{00}})'(u)
z_n^4\bar z_n =\sum_{k'\geq 0, k\ge 1}\Psi_{k'k01}z_n^k\bar z_n.
\end{multline}
Equations \eqref{holbar1} and \eqref{holbar2} enable us to
determine all $f_{\a l}(u)$ uniquely, except $\check f_{\a0}$ for
$|\a|=1$,  $f^n_{01}$ and $\check f_{02}$. To determine
$f^n_{\a0}$ with $|\a|=1$ and $\check f_{02}$ we first determine
$f^n_{\a0}$ by considering $\check z^\a z_n\overline {z_n}$ terms
with $|\a|=1$ in \eqref{holbar2}, and then $\check f_{02}$ by
considering $z_n^2\overline{\check z}^\a$  terms with $|\a|=1$ in
\eqref{holbar1}. Thus we have determined uniquely all $f^n_{\a k}$
for all $\a, k$ and $\check f_{\a k}$ for $|\a|\ne1$, and can set
$$\6N_{k'k10}=\6N_{k'k01}=0 \quad \text{ except }
\6N_{1010}, \; \6N_{0201}, \; \6N_{k'001}.$$

To determine the remaining terms in $f,g$, we consider in the
equation \eqref{cmeqn} terms of the form $\CC{}[[u]] \check z^\a
\overline{\check z}^\b z_n^2\bar z_n,\,|\a|=|\b|=1$. This gives
\begin{multline}
\label{complicated}
%\left(
2\im g''_{00}(u)\langle \check z,\overline{\check z}\rangle
+i\sum\nolimits_{|\a|=1}\langle\check f'_{\a0}(u) \check
z^\a,\overline{\check z}\rangle -
%\right.
\\
%\left.
-i\sum\nolimits_{|\b|=1}\langle\check z, \overline{\check
f'_{\b0}}(u)\overline{\check z}^\b \rangle+
2i(f^n_{01})'(u)\langle \check z,\overline{\check z}\rangle-
i(\overline{f^n_{01}})'(u)\langle \check z,\overline{\check
z}\rangle
%\right)
=\Psi_{1211}.
\end{multline}
For terms in $\CC{}[[u]] \check z^\a \overline{\check
z}^\b,\,|\a|=|\b|=1$, we obtain from \eqref{holbar1}
\begin{equation}
\label{k0alpha0} -2\re g'_{00}(u)\langle\check z,\overline{\check
z}\rangle +\sum_{|\a|=1}\langle\check f_{\a0}(u) \check z^\a,
\overline{\check z}\rangle +\sum_{|\b|=1}\langle\check
z,\overline{\check f_{\b0}}(u) \overline{\check
z}^\b\rangle=\Psi_{1010}.
\end{equation}
Finally, for terms in $\CC{}[[u]]z_n^2\bar z_n$ we obtain from
\eqref{holbar2}
\begin{equation}
\label{0201} -2\re g'_{00}(u)+2f^n_{01}(u)+\overline{f^n_{01}}(u)
=\Psi_{0201}.
\end{equation}
For a multiindex $\alpha\in\left(\mathbb{Z}_{\geq 0}\right)^n$
with $|\alpha|=1$, we use the notation $f^\alpha_{\gamma 0}$ for
the unique component $f^j_{\gamma0},\,1\le j\le n-1$, such that
$\alpha_j=1$. We first take the real part of \eqref{complicated},
 and then consider terms in
$\CC{}[[u]]\check z^\a \overline{\check z}^\b$ with
$|\a|=|\b|=1,\,\alpha\neq \b$; second, we consider in
\eqref{k0alpha0} the terms in $\CC{}[[u]] \check z^\a
\overline{\check z}^\b$ with $|\a|=|\b|=1,\,\a\neq \b$. We get the
system
\begin{equation}
\label{system1} i(f^\b_{\a0})'(u) -i\overline{(f^\a_{\b0})'}(u)
=\bigl(\re\Psi_{1211}\bigr)_{\check z^\a \overline{\check z}^\b},
\quad
f^\b_{\a0}(u)+\overline{f^\a_{\b0}}(u)=\left(\Psi_{\a0\b0}\right)_{\check
z^\a\overline{\check z}^\b}.
\end{equation}
Note that $\re\Psi_{1211}$ is already a Hermitian form.
Differentiating the second equation in \eqref{system1}, we obtain
for each $\a,\b$ as above a nondegenerate linear system for
$(f^\b_{\a0})'(u),\overline{(f^\a_{\b0})'}(u)$, that we can solve
uniquely. The compatibility of the solutions follows from the
reality of $\Psi$. From the normalization conditions
\eqref{normalmap} for $(f,g)$ we get $f^\b_{\alpha0}(0)=0$ for
$\alpha\neq \b$, so that all $f^\b_{\alpha0}(0)$ with $\alpha\neq
\b$ can be determined uniquely. Next, applying $\mbox{\bf tr}$ to
\eqref{complicated} and considering the imaginary part, we get

$$\im\left(2i(f^n_{01})'(u)-i\overline{(f^n_{01})'}(u)\right)=\frac{1}{2(n-1)}\im\Bigl\{\mbox{\bf tr}\,\Psi_{1211}(u)\Bigr\}.$$

Differentiating \eqref{0201} and considering the imaginary part,
we obtain

$$\im\left(2(f^n_{01})'(u)+\overline{(f^n_{01})'}(u)\right)=\im\Psi'_{0201}(u).$$
The two latter equations enable us to determine $(f^n_{01})'(u)$
uniquely by setting
$$\6N_{0201}=\ker\im, \quad \6N_{1211}=\ker\im\tr.$$

From \eqref{normalmap} we have $f^n_{01}(0)=1$, so that
$f^n_{01}(u)$ is completely determined. After that $\re
g'_{00}(u)$ is uniquely determined by considering the real part of
$\eqref{0201}$, and setting (together with previous normalization)
$$\6N_{0201}=0.$$
and this uniquely determines $\re g_{00}(u)$ thanks to the
condition $g_{00}(0)=0$ (recall that $\im g_{00}(u)$ was already
determined above). Finally, we determine $\re f^\a_{\a0}$ by
considering terms in $\CC{}[[u]]\check z^\a \overline{\check
z}^\a$ in the real part of \eqref{k0alpha0}, and determine $\im
(f^\a_{\a0})'$ by taking the real part
 and considering terms in
$\CC{}[[u]] \check z^\a \overline{\check z}^\a$ in
\eqref{complicated}. Together with previous normalization, this
amounts to setting
$$\6N_{1010}=0, \quad \6N_{1211}=\ker\im\tr \cap \ker\re.$$
Thanks to the conditions $f^\a_{\a0}(0)=1$ for $|\a|=1$, this
enables us to determine uniquely the entire mapping $(f,g)$, as
well as the right-hand side $\Psi$ modulo the subspace $\mathcal
N^n\subset\mathcal F^n$ of series, satisfying \eqref{normalformn}.
The proposition is proved now.
\end{proof}

Using, as in the one-dimensional case, the fact that any
transformation \eqref{dilationsn} preserves the normalization
conditions \eqref{normalformn}, we have:

\begin{corol}
The group of formal invertible transformations, preserving the
germ at $0$ of the model hypersurface \eqref{modeln}, consists of
the linear transformations \eqref{dilationsn}.
\end{corol}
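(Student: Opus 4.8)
The plan is to obtain the corollary as a formal consequence of the normalization in Proposition~4.1, precisely mirroring the deduction of the two-dimensional statement (Corollary~3.2) from its underlying proposition. There are two inclusions to establish, of unequal difficulty. That every transformation of the form \eqref{dilationsn} is an automorphism of the model has in effect already been recorded in the derivation preceding Proposition~4.1, where $\Lambda$ is introduced as ``an automorphism of the model''. The reverse inclusion---that every formal automorphism of the germ at $0$ of \eqref{modeln} is of the form \eqref{dilationsn}---is the real content and will rest on the uniqueness clause of Proposition~4.1.

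For the easy inclusion I would simply verify the invariance directly. Writing $\check z^* = \lambda^3 C\check z$, $z_n^* = \rho\lambda^2 z_n$, $w^* = \rho\lambda^6 w$, and using $\langle C\check z,\overline{C\check z}\rangle = \rho\langle \check z,\overline{\check z}\rangle$ together with $\rho^2 = 1$ and $\rho^3 = \rho$, one computes
$$P(z^*,\bar z^*) = \lambda^6\langle C\check z,\overline{C\check z}\rangle + 2\rho^3\lambda^6\re\bigl(z_n^2\bar z_n\bigr) = \rho\lambda^6 P(z,\bar z) = \rho\lambda^6 v = v^*,$$
so the defining equation $v = P(z,\bar z)$ of \eqref{modeln} is preserved, which is exactly the asserted automorphism property of $\Lambda$.

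For the main inclusion, let $F$ be any formal invertible transformation fixing $0$ and preserving the germ of the model. Since the model has the form \eqref{initialn}, the decomposition established just before Proposition~4.1 applies and gives $F = \tilde F\circ\Lambda$, with $\Lambda$ of the form \eqref{dilationsn} and the weighted components of $\tilde F$ satisfying the normalization \eqref{normalmap}, i.e.\ $\tilde F\in\mathcal G^n$. By the easy inclusion already proved, $\Lambda$ is an automorphism of the model, hence so is $\tilde F = F\circ\Lambda^{-1}$. The decisive observation is that the model \eqref{modeln} already lies in normal form: being weighted homogeneous of weight $6$, it has no terms of weight $\geq 7$ and therefore satisfies the normalization conditions \eqref{normalformn} trivially. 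Thus $\tilde F\in\mathcal G^n$ carries a hypersurface in $\mathcal N^n$ (the model) to a hypersurface again in $\mathcal N^n$ (the model again).

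The one point requiring care is the invocation of uniqueness. Proposition~4.1 produces, for each hypersurface of the form \eqref{initialn}, a \emph{unique} $\tilde F\in\mathcal G^n$ bringing it to a normal form in $\mathcal N^n$; what the term-by-term solvability in its proof actually yields is uniqueness of the full normalizing map within $\mathcal G^n$, not merely of the target. Applied to the model, the identity is one such normalizing map, so by uniqueness $\tilde F = \mathrm{id}$, whence $F = \Lambda$ has the form \eqref{dilationsn}. Combining the two inclusions proves the corollary. The argument is short precisely because all the work is hidden in Proposition~4.1; the only genuine subtlety is recognizing that the model is a fixed point of the normalization, so that its stabilizer is forced down to the dilation group \eqref{dilationsn}.
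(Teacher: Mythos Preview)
Your proof is correct and follows the same route the paper intends: the paper records only the one-line remark that the corollary follows ``using, as in the one-dimensional case, the fact that any transformation \eqref{dilationsn} preserves the normalization conditions \eqref{normalformn},'' and your argument is precisely the natural unpacking of this---decompose $F=\tilde F\circ\Lambda$ via the weight analysis, observe that the model is already in normal form, and invoke the uniqueness built into Proposition~4.1 to force $\tilde F=\mathrm{id}$. The explicit verification that $\Lambda$ preserves the model and the careful remark that uniqueness in Proposition~4.1 applies to the normalizing map (not merely the target) are both appropriate elaborations of what the paper leaves implicit.
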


\begin{corol}
If $(N,0)$ and $\tilde N,0)$ are two different normal forms of a
fixed germ $(M,p)$, where $M\subset\CC{n+1}$ is a real-analytic
hypersurface with generic Levi degeneracy at $p$, then there
exists a linear transformation $\Lambda$, as in
\eqref{dilationsn}, which maps $(N,0)$ into $(\tilde N,0)$.
\end{corol}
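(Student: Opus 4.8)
The plan is to deduce the statement from the uniqueness contained in Proposition~4.1, just as in the two-dimensional case. Let $F$ and $\tilde F$ be normalizing transformations sending $(M,p)$ onto the normal forms $(N,0)$ and $(\tilde N,0)$, and set $\Phi:=\tilde F\circ F^{-1}$, a biholomorphic germ of $(\CC{n+1},0)$ sending $(N,0)$ onto $(\tilde N,0)$. Since normal forms are in particular of the class \eqref{initialn}, both $N$ and $\tilde N$ satisfy \eqref{simplifiedn}, so $\Phi$ is a transformation between two such hypersurfaces and the tangency identity \eqref{tangencyn} applies to it.

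First I would run, for $\Phi$ in place of the original map, the low-weight analysis of \eqref{tangencyn} carried out before \eqref{normalmap}: the weights $2,\dots,5$ force $\check f_2=g_2=\cdots=g_5=0$, and weight $6$ produces $\check f_3=\lambda^3 C\check z$, $f^n_2=\rho\lambda^2 z_n$, $g_6=\rho\lambda^6 w$ with $\langle C\check z,\overline{C\check z}\rangle=\rho\langle\check z,\overline{\check z}\rangle$. This yields the canonical factorization $\Phi=\Psi\circ\Lambda$, in which $\Lambda$ is a linear transformation of the form \eqref{dilationsn} and the weighted components of $\Psi$ satisfy the normalization \eqref{normalmap}, i.e.\ $\Psi\in\mathcal G^n$.

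Next I would exploit that dilations preserve normal forms. Every $\Lambda$ as in \eqref{dilationsn} is an automorphism of the model \eqref{modeln} and, being weighted-homogeneous and linear, preserves both the class \eqref{simplifiedn} and the normalization conditions \eqref{normalformn} (this last fact was recorded just before Corollary~4.2). Hence $N':=\Lambda(N)$ is again a normal form. Now $\Psi\in\mathcal G^n$ sends the normal form $N'$ onto the normal form $\tilde N$. Applying the uniqueness of the normalizing map guaranteed by Proposition~4.1 to the germ $N'$: the transformation in $\mathcal G^n$ bringing $N'$ to a member of $\mathcal N^n$ is unique, and the identity (which lies in $\mathcal G^n$, since $\check f=\check z$ has weight $3$, $f^n=z_n$ weight $2$ and $g=w$ weight $6$, matching \eqref{normalmap}, and which fixes the normal form $N'$) is one such. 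Therefore $\Psi=\mathrm{id}$ and $\tilde N=N'=\Lambda(N)$, so $\Lambda$ maps $(N,0)$ onto $(\tilde N,0)$, as claimed.

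The main obstacle is the uniqueness step, which rests entirely on the direct-sum decomposition $\mathcal F^n=L^n(\mathcal G^n)\oplus\mathcal N^n$ of Proposition~4.1: one must be sure that a hypersurface already lying in $\mathcal N^n$ is normalized within $\mathcal G^n$ only by the identity, equivalently that at every weight the Chern--Moser image $L^n(\mathcal G^n)$ meets the normal subspace $\mathcal N^n$ only in $0$. The factorization step is mere bookkeeping once the identities of weight at most $6$ arising from \eqref{tangencyn} are assembled; the only thing to verify carefully there is that the residual map $\Psi$ indeed lands in $\mathcal G^n$, so that Proposition~4.1 can be invoked with the identity as the competing normalization.
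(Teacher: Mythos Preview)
Your argument is correct and is exactly the approach the paper intends: factor the map between normal forms as $\Psi\circ\Lambda$ with $\Lambda$ as in \eqref{dilationsn} and $\Psi\in\mathcal G^n$, note that $\Lambda$ preserves the normalization conditions \eqref{normalformn}, and then invoke the uniqueness consequence of Proposition~4.1 (the inductive argument spelled out after Proposition~2.1, which carries over verbatim to the higher-dimensional case) to force $\Psi=\mathrm{id}$. The paper gives no further details beyond the sentence preceding Corollary~4.2, so your write-up is in fact more explicit than the original.
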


\subsection{Convergence of the normalizing transformation}

\mbox{}\\

It remains to prove that the transformation, satisfying
\eqref{normalmap} and bringing $(M,0)$ into a normal form, is a
composition of certain (convergent) biholomorphic transformations.
We describe these transformations below, giving a geometric
interpretation for each of them.

\medskip

\noindent \bf Canonical pair of foliations on the Levi degeneracy
set. \rm  We first define a pair of canonical foliations on the
Levi degeneracy set $\Sigma\subset M$. Recall that $\Sigma\subset
M$ is a codimension one submanifold, such that $T_p\Sigma\subset
T_p M$ is transverse to the Levi kernel $K_p\subset T^{\CC{}}_pM$
at every point $p\in\Sigma$. For a fixed point $p\in \Sigma$,
% and make a polynomial coordinate
%change, bringing $p$ to the origin and $M$ to the form \eqref{simplified} (that we denote by $M_S$).
let $N$ denotes a (formal) normal form  \eqref{normalformn} of $M$
at $p$, $F$ a formal transformation, mapping $(M,p)$ onto $(N,0)$,
and $e^{n+1}$ the vector $(0,...,0,1)\in\CC{n+1}$. We then define
a slope at $p$ as follows:

$$l(p):=\mbox{span}_{\RR{}}\left\{(dF|_p)^{-1}(e^{n+1})\right\}.$$

It follows from Corollary 4.3 that the definition of $l(p)$ is
independent of the choice of a normal form and the corresponding
normalizing transformation. Arguing identically to the
two-dimensional case, we conclude that $l(p)$ depends on $p$
analytically. We integrate $l(p)$ and obtain a (non-singular)
real-analytic foliation $\mathcal T$ of the real manifold
$\Sigma$.

\begin{dfn}
We call each of the leaves of the foliation $\mathcal T$ \it a
degenerate chain. \rm
\end{dfn}
Each degenerate chain $\gamma\subset\Sigma$ at each point
$p\in\gamma$ is transverse to the complex tangent $T^{\CC{}}_pM$.

\medskip
The second canonical foliation in $\Sigma$ corresponds to the
slope field
$$c(p):=K_p\cap T_p\Sigma, \quad p\in\Sigma.$$
Integrating $c(p)$ we obtain a real-analytic foliation $\mathcal
S$ of $\Sigma$.
%The leaves of the foliation can be described explicitly as intersections $\{Q_p\cap\Sigma,\,p\in\Sigma\}$.
The canonical foliations $\mathcal T$ and $\mathcal S$, called \it
the transverse \rm and \it the tangent \rm foliations
respectively, are transverse to each other and are biholomorphic
invariants of $(M,0)$.

\medskip

\noindent \bf Normalization of a chain and of the Levi kernels
along the chain. \rm For a germ $(M,0)$ of a real-analytic
hypersurface $M\subset\CC{n+1}$, satisfying \eqref{initialn}, with
Levi degeneracy set $\Sigma$, we choose the unique degenerate
chain $\gamma\subset\Sigma$, passing through $0$. Let $s_p$
denotes a leaf of the foliation $\mathcal S$, passing though a
point $p\in\Sigma$. Consider then the set
$$S:=\bigcup_{p\in\gamma}s_p.$$
Since the foliation $\mathcal S$ is analytic and transverse to
$\mathcal T$, by shrinking $\gamma$ and the leaves $s_p$,
$p\in\gamma$,  we may assume that $S\subset\Sigma$ is a
two-dimensional real-analytic submanifold. Moreover,
$S\subset\CC{n+1}$ is totally real (since $l(p),c(p)$ lie in
complementary complex subspaces of $\CC{n+1}$). Thus there exists
a biholomorphic transformation $(\CC{n+1},0)\mapsto (\CC{n+1},0)$,
transforming $S$ into the totally real plane
\begin{equation}
\label{flatn}\Pi^n=\{\check z=0, \re z_n=0, v=0\},
\end{equation}
$\gamma$ into the line
\begin{equation}
\label{Gamman}\Gamma^n=\{z=0, v=0\}\subset\Pi^n,
\end{equation}
and each $s_p,\,p\in\gamma$, into
\begin{equation}
\label{horizontaln}\{\check z=0,\,\re
z_n=0,\,w=b\},\,p=(0,...,0,b),\,b\in\RR{}.
\end{equation}
It is not difficult to see that this transformation can be chosen
in such a way that it preserves \eqref{initialn}. The transformed
hypersurface $M^*$ contains $\Gamma^n$ and thus satisfies, in
addition to \eqref{initialn}, the condition $\Phi^*_{00}(u)=0$. In
what follows we consider only transformations, preserving
$\Gamma^n$ and the conditions \eqref{initialn}. It is \it
important to note \rm that in the normal form \eqref{normalformn}
the set $S$ is not necessarily flat, as in \eqref{flatn}. However,
the slope field $c(p)$ remains horizontal for $p\in\Gamma^n$, as
in \eqref{horizontaln}, and the Levi kernels $K_p,\,p\in\Gamma^n$,
all look as $\{\check z=0,\,w=0\}$.
%Thanks to
%that, we have
%\begin{equation}
%\label{determinant} \mbox{det}\bigl(\Phi_{z_i\bar
%z_j}\bigr)^n_{i,j=1}(0,0,u)\equiv 0.
%\end{equation}

\medskip

\noindent \bf Normalization of Segre varieties along a chain. \rm
Arguing identically to the corresponding step in Section 2, we
perform the unique biholomorphic transformation of the form
$$z^*=z,\quad w^*=g(z,w),\quad g(0,w)=0,$$ transforming $(M,0)$
into a real-analytic hyperfurface, satisfying
$\Phi^*_{k'k00}(u)=0$ for any $k',k\geq 0$.
 It is straightforward that $\Gamma^n$ is preserved and the slope field
$c(p)$ remains horizontal (i.e.\  of the form \eqref{horizontaln})
for $p\in\Gamma^n$. The Segre varieties of points
$(\xi,\eta)\in\CC{n}\times\CC{}$ with $(\xi,\eta)\in\Gamma^n$ are
all of the form $\{w=\bar\eta\}$. In addition, we claim that for
the new hypersurface $M^*$ one has
\begin{equation}
\label{nohermitian}\Phi_{z_j\overline{z_n}}(0,0,u)=0,\quad
j=1,\ldots,n, \quad \text{ i.e. } \quad
\Phi_{1001}=\Phi_{0110}=\Phi_{0101}=0.
\end{equation}
This follows from the fact that the complex tangents  $T_p^{\CC{}}
M,\,p\in\Gamma^n$, all have the form $\{w=0\}$ in the new
coordinates, while the Levi kernels $K_p,\,p\in\Gamma^n$, all
remain $\{\check z=0,\,w=0\}$.

\medskip

\noindent \bf Normalization of the Segre map. \rm This step can be
interpreted as a normalization of the Segre map, considered as an
antiholomorphic map
$$p\mapsto \left(w^p;w^p_{z_1},\ldots,w^p_{z_n};w^p_{z_n^2}\right)\bigl|_{z=0}\bigr.,$$
assigning to a point $p\in\CC{n+1}$ the partial 2-jet at $z=0$ of
its Segre variety $Q_p=\{w=w^p(z)\}$.

Similarly to Section 2, we introduce the subspace
$\6D^n\subset\mathcal F^n$, which consists of all convergent power
series of the form
$$\sum\limits_{k'\geq 2}\bigl(\Psi_{k'001}(\check z,\overline{\check
z},u)\bar z_n +\Psi_{01k'0}(\check z,\overline{\check
z},u)z_n\bigr) +\sum\limits_{ k'+k,\, l'+l\geq
2}\Psi_{k'kl'l}(\check z,\overline{\check z},u)z_n^k\bar z_n^l.
$$
We aim to bring a hypersurface $M$, obtained in the previous step,
to such a form that all terms of weight $\geq 7$ in $\Phi$ belong
to the space $\mathcal D^n$, i.e., to bring the defining equation
to the form
\begin{equation}
\label{prenormaln} v=P(z,\bar z) \mod \6D^n.
\end{equation}
The subspace of terms to be removed from $\Phi$ in the current
step is transverse to $\mathcal D^n$, and adding to $\Phi$ an
element of $\mathcal D^n$ does not change the desired form of it,
that is why it is convenient for us to use identities modulo
$\mathcal D^n$.

We begin by making the terms $\Phi_{1010}(\check
z,\overline{\check z},u)$ and $\Phi_{0201}(u)$ for the
hypersurface $M$, obtained in the previous step, independent of
$u$. For that we note that $\Phi_{1010}(\check z,\overline{\check
z},u)$ is an analytic family $H_u(\check z,\overline{\check z})$
of Hermitian forms, and $H_0(\check z,\overline{\check z})=\langle
\check z,\overline{\check z}\rangle$ is nondegenerate. We denote
the  analytic function $\Phi_{0201}(u)$ as $a(u)$, $a(0)=1$. Then
by the implicit function theorem, there exist real-analytic
functions $T(u)\in{\sf GL}(n-1,\C)$ and $c(u)\in\C^*$
 near the origin such that
$$H_u(\check z,\overline{\check
z})=\langle T(u)\check z,\overline{T(u)\check z}\rangle,\quad
a(u)=(c(u))^2\overline{c(u)},\quad T(0)=\mbox{Id},\,c(0)=1.$$ Then
we perform the biholomorphic transformation
\begin{equation}
\label{killun} \check z\mapsto T(w)\check z,\quad z_n\mapsto
c(w)z_n,\quad w\mapsto w.
\end{equation}
Since for the initial hypersurface the defining function
$\Phi(z,\bar z,u)$ contains only terms of the form
$\Phi_{k'kl'l},\,k'+k\geq 1,\,l'+l\geq 1$, the same property holds
for the new defining function $\Phi^*$ of the new hypersurface
$M^*$, and we compute
$$\Phi(z,\bar z,u)=\Phi^*(z^*,\bar z^*,u^*)=\Phi^*(\check zT(u),c(u)z_n,\overline{\check zT(u)},\overline{c(u)z_n},u)\quad
\mod \6D^n,$$ provided $(z,w)\in M$.  It follows from here that
 $$\Phi_{1010}(\check z,\overline{\check z},u)=\Phi^*_{1010}(\check zT(u),\overline{\check zT(u)},u)$$
 and
 $$\Phi_{0201}(u)=\Phi^*_{0201}(u)(c(u))^2\overline{c(u)},$$
 so that
$$\Phi^*_{1010}(\check z^*,\overline{\check z^*},u^*)=\langle\check z^*,\overline{\check z^*}\rangle,\quad
\Phi^*_{0201}(u^*)=1,$$ as required.
%Summarizing, we can assume $M$ to be of the form
%$$v=P(z,\bar z)
%+ \sum \Phi_{k'kl'l}(\check z, \1{\check z}, u)z_n^k\bar z_n^l.$$

We then achieve the condition \eqref{prenormaln} by performing a
transformation
$$z^*=z+f(z,w),\quad w^*=w, \quad f(z,w)=O(|z|^2),$$ where $f$
will be determined below.
% in order to achieve the condition
%\eqref{prenormaln}.
We do so in several steps.

First, we eliminate terms $\Phi_{k'k10}$ with $k'+k\geq 2$ by a
transformation
$$
(\check z, z_n, w)\mapsto (\check z+\check f(z,w),z_n, w), \quad
\check f(z,w)=O(|z|^2).$$ For that, we choose a holomorphic
function $B(z,u)=O(|z|^2)\in\C^{n-1}$ satisfying
$$
%\langle \check z, \1{\check z}\rangle +
\sum_{k'+k\ge2}\Phi_{k'k10}(\check z,\1{\check z},u)z_n^k= \langle
B(z,u),\overline{\check z}\rangle.$$ Then, arguing as above and
using identities modulo $\mathcal D^n$, we see that the
transformation
$$(\check z, z_n, w)\mapsto (\check z + B(z,u), z_n, w)$$
transforms $M$ into a hypersurface satisfying
$$\Phi^*_{k'k10}=0, \quad k'+k\ge 2.$$

%in view of \eqref{nohermitian}, we expand the defining
%function of the hypersurface, obtained in the previous step, as
%$$\Phi(z,\bar z,u)=\langle \check
%z+B(z,u),\overline{\check
%z+B(z,u)}\rangle+2\re\bigl(z_n^2\bar z_n\bigr)+z_n\overline{z_n}\cdot
%O\bigl(|\check z|+|z_n|^2\bigr)\quad (\mbox{mod}\,\mathcal D^n)$$
%for an appropriate analytic  vector function $B(z,u)=O(|z|^2)$
%that can be determined from the implicit function theorem.
% Putting $\check
%f(z,u):=B(z,u)$, we end up with a hypersurface $v=\Phi^*(z,\bar
%z,u)$ of the form
%$$\Phi^*(z,\bar z,u)=P(z,\bar z)+z_n\overline{z_n}\cdot O\bigl(|\check z|+|z_n|^2\bigr)\quad (\mbox{mod}\,\mathcal D^n),$$
%as desired.

Second, we use similar arguments to remove the terms
$\Phi_{k'101}$ with $k'\geq 1$ by a transformation
$$(\check z,z^{n}, w)\mapsto \bigl(\check z, z^n+f^n(\check
z,w), w\bigr),\quad f^n=O(|\check z|^2).$$ Thanks to
\eqref{nohermitian}, we expand the defining function of the
hypersurface, obtained in the previous step, as
\begin{gather*}
\Phi(z,\bar z,u)=\langle \check z,\overline{\check
z}\rangle+(z^n+\varphi(\check z,u))^2\overline{(z^n+\varphi(\check
z,u))}+(z^n+\varphi(\check z,u))(\overline{z^n+\varphi(\check
z,u)})^2+\\
+(z^n)^2\overline{z^n}\cdot O(|z|)+z^n(\overline{z^n})^2\cdot
O(|z|)\quad (\mbox{mod}\,\mathcal D^n)
\end{gather*}
for an appropriate analytic function $\varphi(\check
z,u)=O(|\check z|^2)$ (more precisely, we choose
$\varphi:=\frac{1}{2}\sum_{k'\geq 2} \Phi_{k'101}$). Then putting
$f^n:=\varphi(\check z,w)$, we obtain the desired transformation.
%and hence
%$$(z_n+f^n(\check z,w))^2\1{(z_n+f^n(\check z,w))}
%=z_n^2\bar z_n +
%$$
%
%Thanks to
%\eqref{nohermitian}, we expand the defining function of the
%hypersurface, obtained in the previous step, as
%\begin{gather*}
%\Phi(z,\bar z,u)=\langle
%\check z,\overline{\check z}\rangle+(z_n+\varphi(\check
%z,u))^2\overline{(z_n+\varphi(\check z,u))}+(z_n+\phi(\check
%z,u))(\overline{z_n+\varphi(\check
%z,u)})^2+\\
%+z_n^2\bar z_n\cdot O(|z|)+z_n\bar z_n^2\cdot
%O(|z|)\quad (\mbox{mod}\,\mathcal D^n)
%\end{gather*}
%for an appropriate analytic function $\varphi(\check
%z,u)=O(|\check z|^2)$. Then putting $f^n:=\varphi(\check z,w)$, we
%obtain  the desired transformation.

Finally, we remove the terms $\Phi_{kl01}$, $l\geq 2,\,k+l\geq 3$.
We expand the defining function of the hypersurface, obtained in
the previous step, as
\begin{gather*}
\Phi(z,\bar z,u)=\langle \check z,\overline{\check
z}\rangle+(z_n+z_n\psi(z,u))^2\overline{(z_n+z_n\psi(z,u))}+\\
+(z_n+z_n\psi(z,u))(\overline{z_n+z_n\psi(z,u)})^2 \quad \mod
\6D^n
\end{gather*}
for an appropriate real-analytic function $\psi(z,u)=O(|z|)$,
where one determines $\psi$ by the implicit function theorem from
the equation
$$2\psi+\psi^2=
\sum_{k+l\geq 3,\,l\geq 2} \Phi_{kl01}(\check z, 0,u)z_n^{l-2}.$$
Then we achieve \eqref{prenormaln} by performing the
transformation
$$
(\check z,z_n,w)\mapsto \bigl(\check z,
z_n+z_n\psi(z,w),w\bigr).$$
%(compare with a similar step in Section 2).

 The Segre map is now given by
\begin{equation}
\label{Segren}s=(\xi,\eta)\mapsto
\left(\bar\eta;2i\varepsilon^1\overline{\xi_1},...,2i\varepsilon^{n-1}\overline{\xi_{n-1}},2i(\overline{\xi^n})^2+O(|\check\xi|^2);4i\overline{\xi^n}+O(|\xi|^2)\right).
\end{equation}
Clearly, $\Gamma^n$ is preserved and the Levi kernels
$K_p,\,p\in\Gamma^n$, all remain of the form $\{\check
z=0,\,w=0\}$.

\medskip

\noindent \bf  Fixing an orthonormal basis in
 the Levi-nondegenerate direction
along a chain. \rm We next achieve the normalization condition
$$\re\Phi_{1211}=0$$
 by means of a transformation
\begin{equation}
\label{killre} (\check z, z_n, w)\mapsto (C(w)\check z, z_n,w)
\end{equation}
for an appropriate  holomorphic near the origin function $C(w)$
such that $C(w)\in U(r,n-1-r)$ for $w\in\RR{}$ and
$C(0)=\mbox{Id}$. This step can be interpreted as an analytic
choice of an orthonormal basis
$\bigl\{e^1(u_0),...,e^{n-1}(u_0)\bigr\}$ with respect to the form
$\langle\check z,\overline{\check z}\rangle$ in the
Levi-nondegenerate direction $K^T_p$ at every point
$p=(0,u_0)\in\Gamma^n$.

Let us introduce the subspace $\mathcal C^n\subset\mathcal D^n$
(where $\mathcal D^n\subset\mathcal F^n$ is the space  of power
series used in the previous step), which consists of elements
$\Phi\in\mathcal D^n$ satisfying $\Phi_{1211}=0.$
  It is again convenient for us to use
identities modulo $\mathcal C^n$. Let us then fix some analytic
function $C(u)$, valued in $U(r,n-1-r)$. Note that for any fixed
$a,b\in\CC{n-1}$ we have
$$0=\Bigl(\langle C(u)a
,\overline{C(u)b}\rangle\Bigr)'=\langle C'(u)a
,\overline{C(u)b}\rangle+\langle C(u)a
,\overline{C'(u)b}\rangle.$$

Thus, after the transformation \eqref{killre}, we obtain (recall
that $P=P(z,\bar z)$ denotes the leading polynomial of $\Phi$)

\begin{gather*}
\Phi(z,\bar
z,u)=\Phi^*(z^*,\overline{z^*},u^*)=2\re(z_n^2\overline{z_n})+2\re\bigl(\Phi^*_{1211}z_n^2\overline{z_n}\bigr)+\\
+\bigl\langle(C(u)+iC'(u)P)\check z,\overline{(C(u)+iC'(u)P)\check
z}\bigr\rangle\quad\mbox{mod}\,\mathcal
C^n=\\
=P(z,\bar
z)+2\re\bigl(\Phi^*_{1211}z_n^2\overline{z_n}\bigr)+P\bigl\langle
iC'\check z,\overline{C\check z}\bigr\rangle+P\bigl\langle C\check
z,-i\overline{C'\check
z}\bigr\rangle\quad\mbox{mod}\,\mathcal C^n =\\
=P(z,\bar
z)+2\re\bigl(\Phi^*_{1211}z_n^2\overline{z_n}\bigr)+2\bigl(z_n^2\bar
z_n+z_n(\overline{z_n})^2\bigr)\bigl\langle iC'\check
z,\overline{C\check z}\bigr\rangle\quad\mbox{mod}\,\mathcal C^n,
\end{gather*}
\noindent provided $(z,w)\in M$. Thus we have
$$\Phi_{1211}(\check z,\overline{\check z},u)=\Phi^*_{1211}(\check z,\overline{\check z},u)+2\bigl\langle iC'\check
z,\overline{C\check z}\bigr\rangle,$$ and the condition
$\re\Phi^*_{1211}=0$ amounts to representing the Hermitian form
$M(\check z,\overline{\check z}):=\frac{1}{2}\re\Phi_{1211}$ in
the form $\bigl\langle iC'\check z,\overline{C\check
z}\bigr\rangle$ for an appropriate function $C(u)\in U(r,n-1-r)$.
Since the Hermitian form $\langle\cdot,\cdot\rangle$ is
nondegenerate, the condition
$$M(\check z,\overline{\check z})=\bigl\langle iC'\check z,\overline{C\check
z}\bigr\rangle$$ can be read as a first order nonsingular analytic
ODE for $C(u)\in\mbox{Mat}(n-1,\CC{})$ near the point
$u=0,C=\mbox{Id}$, that we solve with the initial condition
$C(0)=\mbox{Id}$. It remains to show that for $C(u)$ chosen in
this manner we indeed have $C(u)\in U(r,n-1-r)$. This can be
argued as follows. As $M(\check z,\overline{\check z})$ is a
Hermitian form, we have
$$
\langle C'a ,\overline{Cb}\rangle+\langle
Ca,\overline{C'b}\rangle=0\quad\forall a,b\in\CC{n-1},$$ thus
$$\Bigl(\langle Ca
,\overline{Cb}\rangle\Bigr)'=\langle C'a
,\overline{Cb}\rangle+\langle Ca,\overline{C'b}\rangle=0$$ for any
fixed $a,b\in\CC{n-1}$, so that
$$\langle C(u)a ,\overline{C(u)b}\rangle=\langle C(0)a
,\overline{C(0)b}\rangle=\langle a ,\overline{b}\rangle,$$ as
required.

\medskip

\noindent \bf  Fixing a parametrization for a chain. \rm For the
hypersurface, obtained in the previous step, we claim that
$$\Phi_{1102}(\check z,\overline{\check z},u)=\tr\Phi_{1111}(\check z,\overline{\check z},u)=0$$
in \eqref{prenormaln}.

Indeed, consider the (formal) transformation $F=(f,g)=(\check
f,f^n,g)$, bringing a hypersurface \eqref{prenormaln} into normal
form, and study the equation \eqref{tangencyn}, applied to it.
Collecting terms with $z_n\overline{z_n}u^l,\,l\geq 0$, we first
obtain $\re f^n_{00}(u)=0$. Collecting all terms of the form
$\check z^\a(\overline{\check z})^\beta
z_n\overline{z_n}u^0,\,|\a|=|\beta|=1$, we obtain precisely the
equation \eqref{findfn00}, where we substitute $u:=0$ and
$\Psi:=2\Phi$. Since $\Gamma^n\subset M$ is a degenerate chain, we
have $(f^n_{00})'(0)=0$, so that $\tr\Phi_{1111}(\check
z,\overline{\check z},0)=0$.

Next, collecting first in \eqref{tangencyn} all terms of the form
$\check z^\a u,\,|\a|=1$, and second all terms of the form $\check
z^\a\overline{z_n}(z_n)^2u^0,\,|\a|=1$, it is straightforward to
check that we obtain precisely the equations \eqref{starting},
where the first equation is differentiated and substituted
$u:=0,\,\Psi_{1000}:=0$, and the second is substituted
$u:=0,\,\Psi_{1102}:=2\Phi_{1102}$. Thanks to the fact that
$\Gamma^n$ is a degenerate chain we have $\check f'_{00}(0)=0$,
and we conclude that $\Phi_{1102}(\check z,\overline{\check
z},0)=0$. Since the prenormal form \eqref{prenormaln} is invariant
under the real shifts $w\mapsto w+u_0,\,u_0\in\RR{}$, and
$\Gamma^n\subset M$ is a degenerate chain, we similarly conclude
that
$$\Phi_{1102}(\check z,\overline{\check z},u_0)=\tr\Phi_{1111}(\check z,\overline{\check z},u_0)=0$$
for any small $u_0\in\RR{}$ in \eqref{prenormaln}, as required.

It remains to achieve the last normalization condition
$$\im\tr\Phi_{1211}=0,$$
using, as before, an appropriate gauge transformation
\begin{equation}
\label{gaugen} \check z\mapsto \check f(w)\check z,\quad
z_n\mapsto f^n(w)z_n,\quad w\mapsto g(w),\quad \check f(0)\neq
0,\,f^n(0)\neq 0,\,g(0)=0,\,g'(0)\neq 0.
\end{equation}
We do so by choosing a gauge transformation
$$\check z^*=(q'(u))^{1/2}\check z,\quad {z_n}^*=(q'(w))^{1/3}z_n,\quad w^*=q(w)$$ for an appropriate
$q(w)$ with $\im q(u)=0,\,q(0)=0,q'(0)>0$. This transformation can
be interpreted as a choice of parametrization for the initial
degenerate chain $\gamma$.

We introduce the subspace $\mathcal A^n\subset\mathcal C^n$,
characterized by the conditions
$$\Phi_{1102}=\Phi_{1111}=0.$$ For the same reason as
in the previous steps, we use identities modulo $\mathcal A^n$.
Note also that $\mathcal A^n$ is contained in the space $\mathcal
N^n$ of series, satisfying the normalization conditions
\eqref{normalformn}.

Applying the above transformation to a hypersurface, satisfying
\eqref{prenormaln} and
$$\Phi_{1102}=\re\Phi_{1211}=\tr\Phi_{1111}=0,$$
it is straightforward to compute (as before, we use expansions of
the form $h(u+iv)=h(u)+ih'(u)v+\dots$)
\begin{gather*}
v^*=q'(u)v\quad\mbox{mod}\,\mathcal
A^n,\\
P(z^*,\overline{z^*})=q'(u)P(z,\bar
z)+\frac{i}{3}q''(u)\langle\check z,\overline{\check
z}\rangle(z_n)^2\overline{z_n}\quad\mbox{mod}\,\mathcal
A^n\\
v^*-P(z^*,\overline{z^*})=q'(u)(v-P(z,\bar
z))+\frac{i}{3}q''(u)\langle\check z,\overline{\check
z}\rangle\left((z_n)^2\overline{z_n}-z_n(\overline{z_n})^2\right)\quad\mbox{mod}\,\mathcal
A^n.
\end{gather*}
Hence
$$\im\tr\Phi^*_{1211}=q'\cdot\im\tr\Phi_{1211}+\frac{2(n-1)}{3}q'',$$
so that the condition $\im\tr\Phi^*_{1211}=0$ becomes a second
order nonsingular ODE for $q(u)$. Solving it with some initial
condition $q'(0)>0$, we finally obtain a hypersurface, satisfying
all the normalization conditions \eqref{normalformn}. It is not
difficult to see that the gauge transformation chosen to achieve
$\im\tr\Phi^*_{1211}=0$ must have the above form and hence is
unique up to the choice of the real parameter $q'(0)$,
corresponding to the action of the subgroup
\begin{equation}
\label{scalingsn} \check z\mapsto \lambda^3\check z,\quad
z_n\mapsto \lambda ^2 z_n,\quad w\mapsto \lambda^6
w.
\end{equation}
in \eqref{dilationsn}. Thus \it

\smallskip

we can canonically, up to the action of the group
\eqref{scalingsn}, choose a parametrization on each degenerate
chain.\rm
\medskip

Theorem 2 is completely proved now. As in the two-dimensional
case, we can see from the proof that in the normal form
\eqref{normalformn} the unique degenerate chain, passing through
the origin, is given by \eqref{Gamman}.
%In addition, we see that

%\medskip

%\it a transformation, bringing a germ of a real-analytic
%hypersurface $M\subset\CC{n+1},\,n\geq 2$, with generic Levi
%degeneracy at $0$ into normal form, is completely determined by

%\smallskip

%(a) fixing an orhtonormal basis for the Levi form in the
%Levi-nondegenerate direction;

%\smallskip

%(b) fixing a tangent vector at $0$ to the degenerate chain,
%passing through $0$.


\begin{thebibliography}{99}




\bibitem{ber}  M. S. Baouendi, P. Ebenfelt, L. P. Rothschild.
``Real Submanifolds in Complex Space and Their Mappings''.
Princeton University  Press, Princeton Math. Ser. {\bf 47},
Princeton, NJ, 1999.

\bibitem{belsmall} V.~Beloshapka. {\it Automorphisms of degenerate hypersurfaces in $\mathbb{C}^2$ and a dimension conjecture}. Russian J. Math. Phys. 4 (1996), no. 3,
393–396.

\bibitem{obzor} V. K. Beloshapka. {\it Real submanifolds of a complex
space: their polynomial models, automorphisms, and classification
problems}, (Russian)  Uspekhi Mat. Nauk  {\bf 57} (2002), no.
1(342), 3--44; translation in  Russian Math. Surveys {\bf 57}
(2002),  no. 1, 1--41.

\bibitem{bu} V.~Burcea. {\it A normal form for a real 2-codimensional submanifold in $\mathbb C^{N+1}$ near a CR singularity.} Adv. Math. 243 (2013), 262–295.

\bibitem{chern} S. S. Chern and J. K. Moser. {\it Real hypersurfaces in
complex manifolds},  Acta Math.  {\bf 133}  (1974), 219--271.

\bibitem{ebenfeltjdg} P.~Ebenfelt. {\it New invariant tensors in CR structures and a normal form for real hypersurfaces at a
generic Levi degeneracy.} J. Differential Geom. 50 (1998), no. 2,
207–247.

\bibitem{ebenfeltC3} P.~Ebenfelt. {\it Normal forms and biholomorphic equivalence of real hypersurfaces in $\mathbb C^3$}. Indiana Univ. Math. J. 47 (1998), no. 2, 311–366.

\bibitem{es} V.~Ezhov and  G.~Schmalz. {\it Normal form and two-dimensional chains of an elliptic CR-manifold in $\mathbb C^4$}. J. Geom. Anal. 6 (1996), no. 4, 495–529 (1997).

\bibitem{hy} X.~Huang and W.~Yin,  {\it A Bishop surface with a vanishing Bishop invariant.} Invent. Math. 176 (2009), no. 3, 461–520.

\bibitem{ilyashenko} Y.~Ilyashenko and S.~Yakovenko. {Lectures on
analytic differential equations.} Graduate Studies in Mathematics,
86. American Mathematical Society, Providence, RI, 2008.

\bibitem{kolar} M.~Kolar. {\it Normal forms for hypersurfaces of finite type in $\mathbb C^2$.}
Math. Res. Lett. {\bf 12} (2005), no. 5-6, 897–-910.

\bibitem{kolardiverg} M.~Kolar. {\it Finite type hypersurfaces with divergent normal form.} Math. Ann.
{\bf 354} (2012), no. 3, 813–-825.

\bibitem{kl} M.~Kolar and B.~Lamel. {\it Ruled hypersurfaces in
$\CC{2}$}, to appear in Journal of Geometric Analysis,  2014.

\bibitem{mw} J.~Moser and S.~Webster. {\it Normal forms for real
surfaces in $\mathbb C^2$ near complex tangents and hyperbolic
surface transformations.} Acta Math. 150 (1983), no. 3-4,
255-–296.

\bibitem{webstergeneric} S. M. Webster. {\it The holomorphic contact geometry of a real hypersurface.} Modern
Methods in Complex Analysis (eds. T. Bloom et al), Ann. of Math.
Stud. 137, Princeton Univ. Press, Princeton, NJ, 1995, 327-342.

\bibitem{wong} P.~Wong. {\it A construction of normal forms for weakly
pseudoconvex CR-manifolds in $\mathbb C^2$}. Invent. Math. 69
(1982), no. 2, 311-–329.

















\end{thebibliography}
\end{document}